\documentclass[11pt, dina4]{article}

\topmargin=-1.0cm
\usepackage{amssymb, amsmath,amsthm}
\usepackage[utf8]{inputenc}
\usepackage{lineno}
\usepackage{graphicx, epsfig}
\DeclareGraphicsExtensions{.eps}
\usepackage{calrsfs}
\usepackage{multirow}
\usepackage{hyperref}
\usepackage{cite}
\usepackage{comment}
\usepackage{upgreek}
\usepackage{epstopdf}
\usepackage{float}
\usepackage{diagbox}
\usepackage{lscape}
\usepackage{subfigure}
\usepackage{dsfont}
\usepackage{color,xcolor}
\usepackage{soul}
\usepackage[margin=1in]{geometry}
\usepackage{tikz}
\usepackage{amsbsy} 
\usepackage{dutchcal}


\newcommand{\bu}{{\bf u} }
\newcommand{\bv}{{\bf v} }

\newcommand{\md}{\,\mathrm{d}}

\newcommand{\bj}{{\bf j}}
\newcommand{\bxi}{\boldsymbol{\xi}}

\newcommand{\mathr}{\mathbb{R}}
\newcommand{\mathz}{\mathbb{Z}}
\newcommand{\mathf}{\mathcal{F}_{h}}

\newcommand{\og}{\omega}
\newcommand{\Og}{\Omega}
\newcommand{\fl}[2]{\frac{#1}{#2}}

\newcommand{\rmone}{\uppercase\expandafter{\romannumeral1}}
\newcommand{\rmtwo}{\uppercase\expandafter{\romannumeral2}}
\newcommand{\rmthree}{\uppercase\expandafter{\romannumeral3}}
\newcommand{\rmfour}{\uppercase\expandafter{\romannumeral4}}
\newcommand{\rmfive}{\uppercase\expandafter{\romannumeral5}}
\newcommand{\dt}{\delta}
\newcommand{\gm}{\gamma}
\newcommand{\nn}{\nonumber}
\newcommand{\ap}{\alpha}

\newcommand{\veps}{\varepsilon}

\newcommand{\Dt}{\Delta}

\newcommand{\be}{\begin{equation}}
\newcommand{\ee}{\end{equation}}
\newcommand{\ba}{\begin{array}}
\newcommand{\ea}{\end{array}}
\newcommand{\bea}{\begin{eqnarray}}
\newcommand{\eea}{\end{eqnarray}}
\newcommand{\beas}{\begin{eqnarray*}}
\newcommand{\eeas}{\end{eqnarray*}}
\newtheorem{remark}{Remark}[section]
\newtheorem{definition}{Definition}[section]
\newtheorem{lemma}{Lemma}[section]
\newtheorem{theorem}{Theorem}[section]

\newtheorem{example}{Example}[section]
\newcommand{\bx}{{\bf x} }
\newcommand{\by}{{\bf y} }

\newcommand{\bb}{\vskip 10pt}
\definecolor{ForestGreen}{rgb}{0.0, 0.5, 0.0}

\definecolor{RoyalBlue}{cmyk}{0.35, 0.75, 0.5, 0.0}


\title{A novel and simple spectral method for nonlocal PDEs with the fractional Laplacian}

\author{Shiping Zhou\thanks{Department of Mathematics and Statistics, Missouri University of Science and Technology, Rolla, MO 65409 (Email:  szb5g@mst.edu)}, \ \
Yanzhi Zhang\thanks{Department of Mathematics and Statistics, Missouri University of Science and Technology, Rolla, MO 65409 (Email:  zhangyanz@mst.edu)}}

\begin{document}
\date{}
\maketitle

\begin{abstract}
We propose a novel and simple spectral method based on the semi-discrete Fourier transforms 
to discretize the fractional Laplacian $(-\Dt)^\fl{\ap}{2}$.  
Numerical analysis and experiments are provided to study its performance.  
Our method has the same symbol $|\boldsymbol\xi|^\alpha$ as the fractional Laplacian $(-\Delta)^\frac{\alpha}{2}$ at the discrete level, and thus it can be viewed as the exact discrete analogue of the fractional Laplacian. 
This {\it unique feature} distinguishes our method from other existing methods for the fractional Laplacian. 
Note that our method is different from the Fourier pseudospectral methods in the literature which are usually limited to periodic boundary conditions (see Remark \ref{remark0}). 
Numerical analysis shows that our method can achieve a spectral accuracy. 
The stability and convergence of our method in solving the fractional Poisson equations were analyzed. 
Our scheme yields a multilevel Toeplitz stiffness matrix,  and thus fast algorithms can be developed for efficient matrix-vector multiplications. 
The computational complexity is ${\mathcal O}(2N\log(2N))$, and the memory storage is ${\mathcal O}(N)$ with $N$ the total number of points. 
Extensive numerical experiments verify our analytical results and demonstrate the effectiveness of our method in solving various problems.  
\end{abstract}

{\bf Key words.}  Fractional Laplacian;  spectral methods;  semi-discrete Fourier transform; hypergeometric functions;  anomalous diffusion; fractional Poisson equations.



\section{Introduction}
\label{section1}

The fractional Laplacian $(-\Dt)^\fl{\ap}{2}$, representing the infinitesimal generator of a symmetric $\alpha$-stable L\'evy process,  can be viewed as a nonlocal generalization of the classical Laplacian. 
It has found applications in many areas, including quantum mechanics, turbulence, plasma,  finance, and so on. 
Compared to its classical counterpart, the nonlocality of the fractional Laplacian introduces considerable challenges in numerical approximation and computer implementation. 
Over the past decades,  many numerical studies have been reported for the fractional Laplacian; see \cite{Gao2014, Huang2014, Duo2015, Duo2018, Duo2019-FDM, Minden2020, Hao2021, Bonito2019, Acosta2017,  Ainsworth2017,  Burkardt2021, Chen2018, Hao2021, Hao2021sharp, Sheng2020, Tang2018, Tang2020, Wu0021,  Mao2017, Wu2022, Xu0018, Kirkpatrick2016} and references therein. 
In this paper, we propose a novel and simple spectral method based on the semi-discrete Fourier transforms to discretize the fractional Laplacian $(-\Dt)^\fl{\ap}{2}$.  
Note that our method is essentially different from the Fourier pseudospectral methods in the literature \cite{Kirkpatrick2016, Duo2016}; see more discussion in Remark \ref{remark0}. 

The fractional Laplacian can be defined as a pseudo-differential operator with symbol $|{\boldsymbol \xi}|^\ap$ \cite{Landkof1972, Samko1993, Kwasnicki2017},  i.e.,
\begin{equation}\label{pseudo}
(-\Dt)^{\fl{\alpha}{2}} u(\bx) = \mathcal{F}^{-1}\big[ |{\boldsymbol \xi}|^\ap \mathcal{F} [u]\big], \quad \  \text{ for }\alpha>0,
\end{equation}
where $\mathcal{F}$ represents the Fourier transform over $\mathbb{R}^d$ with its associated inverse transform denoted as $\mathcal{F}^{-1}$.  
In the special case of $\ap = 2$,  the definition in (\ref{pseudo}) reduces to the spectral representation of the classical negative Laplacian $-\Dt$.  
In the literature \cite{Landkof1972, Samko1993, Kwasnicki2017}, the fractional Laplacian is also defined in the form of a hypersingular integral:
\begin{equation}\label{integralFL}
(-\Dt)^{\fl{\alpha}{2}} u(\bx) = c_{d,\alpha}\text{ P.V.} \int_{\mathbb{R}^d} \fl{u(\bx)-u(\by)}{|\bx-\by|^{d+\alpha}}\md \by, \quad \ \text{ for } \alpha \in(0,2),
\end{equation}
where P.V. stands for the Cauchy principal value, and the normalization constant $c_{d,\alpha}$ is given by $c_{d,\alpha} = {2^{\alpha-1} \ap\,\Gamma{\big(\fl{\alpha+d}{2}\big)}}/\big[{\sqrt{\pi^{d}}\,\Gamma{\big(1-\fl{\alpha}{2}\big)}}\big]$
with $\Gamma(\cdot)$ denoting the Gamma function. 
Note that the hypersingular integral definition in (\ref{integralFL}) holds only for  power $0 < \ap < 2$. 
For $\ap \in (0, 2)$, the two definitions (\ref{pseudo}) and (\ref{integralFL}) of the fractional Laplacian  are equivalent  in the Schwartz space $S({\mathbb R}^d)$  \cite{Samko1993, Kwasnicki2017}.  
More discussion on  the fractional Laplacian and its relation to other nonlocal operators can be found in \cite{Kwasnicki2017, Duo2019comparison} and references therein.  
In this work, we will focus on the pseudo-differential representation of the fractional Laplacian $(-\Dt)^\fl{\ap}{2}$ in (\ref{pseudo}). 

In the literature, numerical methods for the fractional Laplacian $(-\Dt)^\fl{\ap}{2}$ can be roughly classified into  three groups:  finite element methods,  finite difference methods, and spectral methods. 
Finite element/difference methods are usually developed based on the hypersingular integral definition in (\ref{integralFL}).  
For example, various finite element methods are proposed in \cite{Delia2013, Tian2016, Acosta2017,  Ainsworth2017, Ainsworth2017, Bonito2019} to solve nonlocal PDEs with the fractional Laplacian.  
In \cite{Huang2014,  Gao2014, Huang0016, Duo2018, Duo2019-FDM, Duo2019-TFL, Minden2020, Hao2021, Wu2022}, finite  difference methods are introduced based on different discretization strategies, and most of them have a second-order accuracy. 
On the other side, spectral methods have recently gained a lot of attention in solving nonlocal/fractional PDEs. 
They can achieve high accuracy with less number of points and thus may overcome the challenges caused by the nonlocality.  
In \cite{Kirkpatrick2016, Duo2016}, the Fourier pseudospectral methods are proposed for fractional PDEs with periodic boundary conditions.  
The Jacobi spectral methods are introduced in \cite{Xu0018, Hao2021sharp} for problems defined on a unit ball with extended homogeneous Dirichlet boundary conditions. 
Spectral methods based on Hermite functions are proposed in \cite{Mao2017, Tang2018} for fractional PDEs  on unbounded domains,  while the generalized Laguerre functions are used in \cite{Chen2018}.  
Later, the spectral method with rational basis is proposed in \cite{Tang2020}. 
It is pointed out in \cite{Sheng2020} that due to the singular and nonseparable factor $|\bxi|^{\ap}$ in the Fourier definition, these methods become extremely complicated for $d \ge 2$. 
Moreover,  a new class of meshfree spectral methods are proposed based on radial basis functions \cite{Burkardt2021, Wu0021, Wu2022}. 
They combine both definitions (\ref{pseudo}) and (\ref{integralFL}) of the fractional Laplacian and thus can solve problems with nonhomogeneous Dirichlet boundary conditions.

In this paper, we propose a new and simple spectral method to discretize the fractional Laplacian $(-\Dt)^\fl{\ap}{2}$ and apply it to study nonlocal elliptic problems. 
The key idea of our method is to apply the semi-discrete Fourier transforms to approximate the pseudo-differential representation of the fractional Laplacian in (\ref{pseudo}). 
\textbf{Our scheme can be viewed as a discrete pseudo-differential operator with symbol $|\boldsymbol\xi|^\ap$, and it provides an exact discrete analogue of the fractional Laplacian $(-\Dt)^\fl{\ap}{2}$. } 
This {\bf unique feature} distinguishes our method from other existing methods in the literature. 
The implementation of our method is simple and efficient.  
It yields a multilevel Toeplitz stiffness matrix,  and thus fast algorithms can be developed for  matrix-vector multiplications with computational cost of ${\mathcal O}(2N\log(2N))$ and memory cost of ${\mathcal O}(N)$, where $N$ is the total number of spatial points.  
Numerical analysis are provided to study the performance of our method. 
 We prove that our method has an accuracy of  ${\mathcal O}(h^{p+\gm-\ap+1/2})$ in approximating the fractional Laplacian, if $u \in C^{p, \gm}({\mathbb R})$ with $p\in{\mathbb N}^0$ and $\gm \in (0, 1]$.
Particularly, it has a spectral accuracy if $u \in C^\infty({\mathbb R^d})$. 
Moreover,  the stability and convergence analysis are provided for solving the fractional Poisson equations.  
Theses analytical results are verified and confirmed by our extensive numerical experiments. 
We also apply our method to solve fractional elliptic problems and study the coexistence of normal and anomalous diffusion problems. 
\begin{remark}\label{remark0}
We remark  that our method is essentially different from the Fourier pseudospectral methods in the literature \cite{Kirkpatrick2016, Duo2016}.
These existing methods are based on the {\bf discrete  (instead of semi-discrete)} Fourier transforms and   limited to periodic boundary conditions. 
\begin{figure}[ht!]
\centerline{
(a)\includegraphics[width=5.68cm, height = 3.96cm]{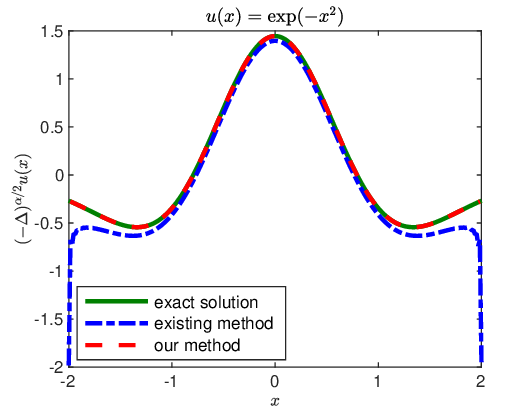}
\hspace{-5mm}
(b)\includegraphics[width=5.68cm, height = 3.96cm]{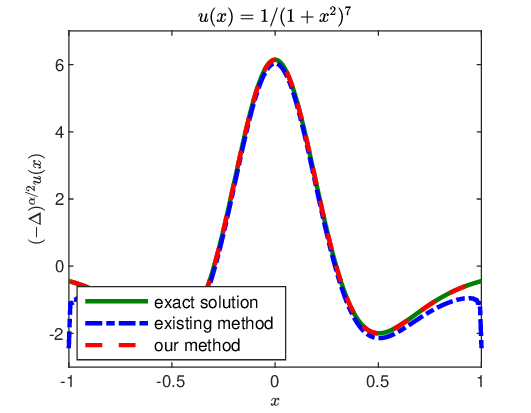}}
\centerline{
(c)\includegraphics[width=5.68cm, height = 3.96cm]{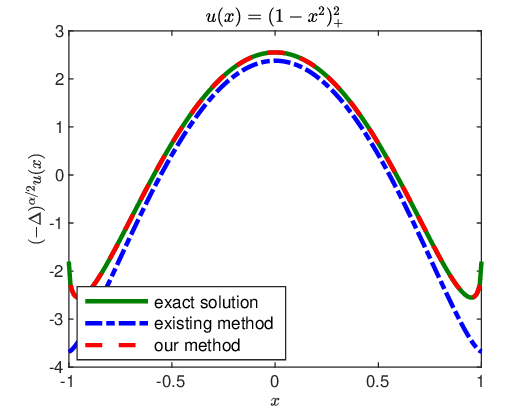}
\hspace{-5mm}
(d)\includegraphics[width=5.98cm, height = 3.96cm]{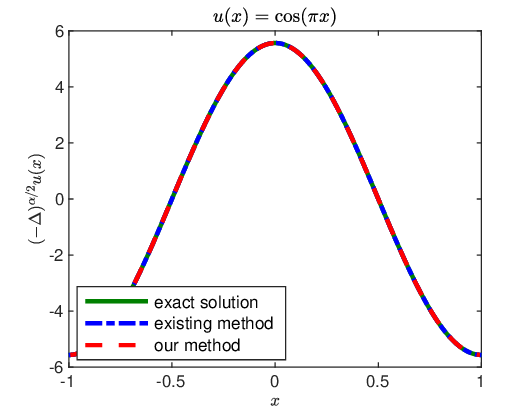}}
\caption{Comparison of our method and the existing Fourier pseudo-spectral methods in \cite{Kirkpatrick2016, Duo2016} for approximating $(-\Dt)^\fl{\ap}{2}u(x)$ on bounded domain. }
\label{Figure0}
\end{figure}
Figure \ref{Figure0} compares our method with these existing methods in approximating $(-\Dt)^\fl{\ap}{2}u$. 
It clearly shows that the existing methods in \cite{Kirkpatrick2016, Duo2016} require periodic boundary conditions to provide accurate approximation; see Figure \ref{Figure0} (d).  
\end{remark}

The paper is organized as follows. 
In Section \ref{section2}, we introduce our new spectral method for the one-dimensional Laplacian. 
Numerical analysis are presented in Section \ref{section3}. 
The generalizations of our method to high dimensions (i.e. $d > 1)$ are addressed in Section \ref{section4}. 
In Section \ref{section5}, we conduct numerical experiments to examine the performance of our method in approximating the classical and fractional Laplacians and in solving the fractional elliptic problems. 
Finally, some concluding remarks are made in Section \ref{section6}. 

\section{Spectral method}
\setcounter{equation}{0}
\label{section2}

Due to the pseudo-differential definition in (\ref{pseudo}), it is natural to introduce the Fourier transform-based methods to approximate the fractional Laplacian. 
For example,  the Fourier pseudospectral methods are introduced to solve the fractional Schr\"odinger equations  in \cite{Kirkpatrick2016, Duo2016, Duo2021} and the reaction-diffusion systems in \cite{Khudhair2022}.  
These methods can be {\it directly} implemented via the fast Fourier transforms, but they are limited to periodic bounded domains. 
In this section, we introduce a new and simple spectral method to discretize the fractional Laplacian $(-\Dt)^\fl{\ap}{2}$. 
Our method is developed based on the semi-discrete Fourier transforms.  
In contrast to those Fourier pseudospectral methods in \cite{Kirkpatrick2016, Duo2016}, our method is free of the constraint of periodic boundary conditions. 

To facilitate the discussion, we will first introduce our method for one-dimensional (i.e. $d = 1$) fractional Laplacian $(-\Dt)^\fl{\ap}{2}$. 
The generalization  to high dimensions (i.e. $d > 1$) will be presented in Section \ref{section4}. 
Let $h > 0$ denote the mesh size.  
In the one-dimensional cases, we define the grid points $x_j = jh$ for $j \in {\mathbb Z}$. 
Denote the one-dimensional grid function $\bu = \{u_{j}\}_{j\in\mathz}$,  which may or may not be an approximation to a continuous function. 
For grid functions $\bu$ and $\bv$, we define the discrete inner product and associated norm as
\begin{equation*}
   {\langle\bu,\,\bv\rangle_h} := h \sum_{j\,\in\,\mathz} u_{j}\,\overline{v}_{j}, \qquad \, \|\bu\|_{l^2} = \sqrt{\langle\bu,\,\bu\rangle_h},
\end{equation*}
where $\overline{v}_j$ represents the complex conjugate of $v_j$. 
Let  $l^2({\mathbb R}) = \big\{ \bv\,\big|\, \|\bv\|_{l^2} < \infty \big\}$.  

In the following, we will first introduce the definitions of semi-discrete Fourier transform and generalized hypergeometric functions, which play an important role in constructing our method. 

\begin{definition}[Semi-discrete Fourier transform]
\label{Def-SFT-1}
For a  grid function $\bu\in l^2({\mathbb R})$, the semi-discrete Fourier transform $\mathcal{F}_h$  is defined as
\begin{equation}\label{semiFT1D}
    \check{u}(\xi) = \big(\mathcal{F}_h [\bu]\big)(\xi) = h\sum_{{ j = -\infty}}^\infty u_{j}\,e^{-i \xi x_{j}}, \qquad \mbox{for} \ \ \xi\in \big[-\fl{\pi}{h}, \ \fl{\pi}{h}\big],
\end{equation}
where $i = \sqrt{-1}$, and the inverse semi-discrete Fourier transform is defined as 
\begin{equation}\label{inverse-semiFT1D}
    u_{j} = \mathcal{F}_{h,\,j}^{-1} [\check{u}] = \fl{1}{2\pi}\int_{-\pi/h}^{\pi/h} \check{u}(\xi)\,e^{i \xi x_{j}}\md \xi, \qquad \mbox{for} \ \  j \in \mathz.
\end{equation}
\end{definition}
Here, we use $\check{u}(\xi)$ to represent the semi-discrete Fourier transform of grid function $\bu$, distinguishing it from $\widehat{u}(\xi)$ -- the Fourier transform of continuous function $u(x)$. 
Different from the continuous Fourier transforms,  the Fourier space in the semi-discrete  transform is bounded, i.e., $\xi \in [-\pi/h, \pi/h]$. 
This can be explained by the aliasing formula (see Lemma \ref{lemma-aliasing}). 

\begin{definition}[Generalized hypergeometric function] 
For $p,\,q \in {\mathbb N}^0$, the generalized hypergeometric function is defined as
\bea\label{pFq}
\,_pF_q \big(a_1, \ldots, a_p;\, b_1, \ldots, b_q;\, z\big) = \sum_{k=0}^\infty \fl{(a_1)_k (a_2)_k \cdots (a_p)_k}{(b_1)_k (b_2)_k \cdots (b_q)_k} \fl{z^k}{k!},
\eea
where $a_l \in {\mathbb C}$ (for $1 \le l \le p$),    $b_m \in {\mathbb C}$ but $b_m \notin \big({\mathbb Z}^-\cup\{0\}\big)$ (for $1 \le m \le q$), and $(a)_k$ denotes the rising Pochhammer symbol, i.e.,
$${\displaystyle (a)_{k}={\begin{cases}1, &\mbox{for \ \ $k=0$}\\a (a+1) \cdots (a+k-1),\qquad \, & \mbox{for \ \ $k>0$}.\end{cases}}}$$
\end{definition} 
If $p \le q$, the series in (\ref{pFq}) is convergent for all values of $z \in {\mathbb C}$. 
If $p = q+1$, it converges for $|z|<1$. 
Particularly, if $p = 2$ and $q = 1$,  (\ref{pFq}) reduces to the well-known Gauss hypergeometric function $_2F_1$. 
For $p = q = 1$,   $_1F_1$ is often called as the confluent hypergeometric function.
For more discussion of the generalized hypergeometric functions, we refer the reader to references \cite{Prudnikov1986vol2, Samko2001}. 

Now we introduce our method.  
Denote $(-\Dt)_h^{\fl{\alpha}{2}}$ as the numerical approximation of the fractional Laplacian $(-\Dt)^{\fl{\alpha}{2}}$, and let $u_j = u(x_j)$ for $j \in \mathz$. 
At point $x = x_k$, we can approximate the Fourier transforms in definition (\ref{pseudo}) by the semi-discrete Fourier transforms and then obtain the approximation:
\bea\label{1Dscheme0}
(-\Dt)_h^{\fl{\alpha}{2}} u_k &=& \fl{1}{2\pi}\int_{-\pi/h}^{\pi/h} |\xi|^\ap \bigg(h\sum_{j = -\infty}^\infty u_j\,e^{-i\xi x_j}\bigg) e^{i\xi x_k} \md\xi\nn\\
&=& \fl{h}{\pi}\int_0^{\pi/h} \xi^\ap \bigg(\sum_{j = -\infty}^\infty u_j \cos\big(\xi(x_k-x_j)\big)\bigg) \md\xi\nn\\
&=& \sum_{j = -\infty}^\infty \bigg(\fl{h}{\pi}\int_0^{\pi/h} \xi^\ap \cos\big((k - j) h\,\xi \big)\md\xi\bigg)u_j,\qquad 
\mbox{for \ $k \in {\mathbb Z}$}.\qquad
\eea
It is clear that the evaluation of function $(-\Dt)^\fl{\ap}{2}u(x)$ at point $x_k$ depends on all  points $x_j \in {\mathbb R}$,  consistent with the nonlocality of  the fractional Laplacian $(-\Dt)^{\fl{\alpha}{2}}$.  
More precisely, the approximation in (\ref{1Dscheme0}) can be viewed as a weighted summation of $u_j$ for $j \in {\mathbb Z}$, and the weight coefficients depend on the distance between points $x_k$ and $x_j$. 
To calculate these coefficients, we first expand $\cos\big((k - j) h\,\xi \big)$ into the Taylor series and then integrate each term to obtain 
\beas
&&\og^{(1)}_{\ap,\,h}(k-j) := \fl{h}{\pi}\int_0^{\pi/h} \xi^\ap \cos\big((k - j) h\,\xi \big)\md\xi\\
&&\hspace{2cm}=\fl{h}{\pi}\int_0^{\pi/h} \xi^\ap \bigg(\sum_{n=0}^{\infty} (-1)^n \fl{[(k-j)h\,\xi\big]^{2n}}{(2n)!} \bigg)\md \xi \qquad\qquad\qquad \\
&&\hspace{2cm}= \Big(\fl{\pi}{h}\Big)^\ap\sum_{n=0}^{\infty} \fl{(-1)^n \big[\pi(k-j)\big]^{2n}}{(2n)! (2n+\ap+1)}, 
\qquad \mbox{for} \ \ j, k \in {\mathbb Z}.
\eeas
Note that the right-hand side can be reformulated in terms of the rising Pochhammer symbols, i.e.,
\bea\label{1Dcoeff}
&&\og^{(1)}_{\ap,\,h}(k-j)=\displaystyle \fl{\pi^\ap}{(\ap+1)h^{\ap}}\sum_{n=0}^{\infty} \fl{\big(\fl{\ap+1}{2}\big)_{n}}{\big(\fl{\ap+3}{2}\big)_{n} \big(\fl{1}{2}\big)_{n}\,n!} \bigg(\fl{-\pi^2 (k-j)^2}{4}\bigg)^n, \qquad\qquad\nn\\
&&\hspace{1.86cm}= \fl{\pi^\ap}{(\ap+1)h^{\ap}}\,_1F_2\Big(\fl{\ap+1}{2};\,\fl{\ap+3}{2}, \, \fl{1}{2};\, \fl{-\pi^2(k-j)^2}{4}\Big), 
\eea
for $j, k \in {\mathbb Z}$, by the definition of generalized hypergeometric function in (\ref{pFq}). 
It shows that the weight $\og^{(1)}_{\ap,\,h}$ is an even function of $(k-j)$, depending on parameters $\ap$ and $h$, especially  $\og_{\ap,\,h}^{(1)}(0) = {(\pi/h)^\ap}/{(\ap+1)}$. 
Substituting (\ref{1Dcoeff}) into  (\ref{1Dscheme0}), we obtain our numerical approximation to the one-dimensional fractional Laplacian as: 
\bea\label{1Dscheme}
(-\Dt)_h^{\fl{\alpha}{2}} u_k = \, \fl{\pi^\alpha}{(\alpha+1) h^\alpha} \sum_{j=-\infty}^{\infty} { }_1F_2\Big(\fl{\ap+1}{2};\, \fl{\ap+3}{2},\,\fl{1}{2};\, \fl{-\pi^2(k-j)^2}{4}\Big)\,u_j, \qquad 
\eea
for $k \in {\mathbb Z}$.  
{ Note that the same scheme is obtained in \cite{Huang0016} by matching the fractional Laplacian with a discrete operator on an infinite lattice.  }
Our scheme (\ref{1Dscheme}) holds for any $\ap \ge 0$, including $\ap = 2$.  
In the special cases of $\ap \in {\mathbb N}^0$, the coefficient function $\og_{\ap,\,h}^{(1)}$ in (\ref{1Dcoeff}) can be simplified to elementary functions. 
In fact,  for any $m \in {\mathbb N}$ and $n \in \mathz\backslash\{0\}$, the integral 
\bea\label{mncoeff1}
&&\int_0^{\pi/h} \xi^m \cos(nh \xi)\md\xi \nn\\
&&\hspace{0.5cm} =  \fl{m!}{(nh)^{m+1}} \bigg[\bigg(\sum_{k = 0}^{\lfloor(m-1)/2\rfloor}{(-1)^{n+k}}\fl{(n\pi)^{m-1-2k}}{(m-1-2k)!}\bigg) -{\rm mod}(m,2) \bigg].\qquad 
\eea
Choosing $\ap = m = 1$ or $2$,  we thus get the coefficients 
\beas
\og_{\ap,\,h}^{(1)}(k-j) =  \fl{1}{h^\ap (k-j)^{2}} \left\{\begin{array}{ll}
\displaystyle \big( (-1)^{k-j} - 1\big)/\pi,  &\ \mbox{if} \ \ \ap = 1, \\
\displaystyle 2(-1)^{k-j},       &\ \mbox{if} \ \ \ap = 2, \\
\end{array}\right. \quad \ \mbox{for}\ \, (k-j) \in \mathz\backslash\{0\}. 
\eeas
If $m = 0$,  the integral in (\ref{mncoeff1}) is zero for any $n \in \mathz\backslash\{0\}$, and thus the scheme (\ref{1Dscheme0}) reduces to the identity operator for $\ap = 0$. 
It is consistent with the analytical property of the fractional Laplacian. 

\begin{lemma}\label{remark2p1}
The discretized operator $(-\Dt)_h^\fl{\ap}{2}$ from our scheme (\ref{1Dscheme}) has exactly the same symbol $|\xi|^\ap$ as the fractional Laplacian $(-\Dt)^\fl{\ap}{2}$, i.e., 
\begin{equation}\label{remark2p1-eq}
    {\mathcal F_h}\big[(-\Dt)^\fl{\ap}{2}_h \bu\big] = |\xi|^\ap \mathcal{F}_{h}[\bu], \qquad\mbox{for} \ \ \ap > 0,
\end{equation}
independent of mesh size $h$. 
\end{lemma}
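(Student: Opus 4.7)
The plan is to recognize that the scheme was constructed so that this identity is essentially tautological, and the proof amounts to retracing the derivation of (\ref{1Dscheme0}) in reverse.

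First I would rewrite the first line of (\ref{1Dscheme0}) compactly. By Definition \ref{Def-SFT-1}, the inner sum $h\sum_{j\in\mathz}u_j e^{-i\xi x_j}$ is precisely $\check{u}(\xi)=(\mathcal{F}_h[\bu])(\xi)$, and the outer integral $\frac{1}{2\pi}\int_{-\pi/h}^{\pi/h}|\xi|^\ap \check{u}(\xi)e^{i\xi x_k}\md\xi$ is precisely $\mathcal{F}_{h,k}^{-1}[|\xi|^\ap\check{u}(\xi)]$. Hence the construction of the scheme can be restated as the operator identity
\begin{equation*}
(-\Dt)_h^{\fl{\ap}{2}} \bu \;=\; \mathcal{F}_h^{-1}\bigl[\,|\xi|^\ap\,\mathcal{F}_h[\bu]\,\bigr],
\end{equation*}
which is the discrete analogue of the pseudo-differential definition (\ref{pseudo}). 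Then I would apply $\mathcal{F}_h$ to both sides. Using that $\mathcal{F}_h$ and $\mathcal{F}_h^{-1}$ are mutually inverse between $l^2(\mathbb{R})$ and $L^2([-\pi/h,\pi/h])$ (which is the standard Plancherel isometry for the semi-discrete Fourier transform), the right-hand side collapses to $|\xi|^\ap\,\mathcal{F}_h[\bu]$, yielding exactly (\ref{remark2p1-eq}). Note that the $h$-independence of the symbol is immediate once the identity is in this form: the factor $|\xi|^\ap$ never underwent any discretization in the derivation of (\ref{1Dscheme0}); only the transform itself was replaced by its semi-discrete analogue.

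The only point of care is ensuring that all operations are legitimate for $\bu\in l^2(\mathbb{R})$. For such $\bu$, $\check{u}\in L^2([-\pi/h,\pi/h])$, and since $|\xi|^\ap$ is bounded on the compact interval $[-\pi/h,\pi/h]$, the product $|\xi|^\ap\check{u}(\xi)$ also lies in $L^2$; hence $\mathcal{F}_h^{-1}[|\xi|^\ap\check{u}]$ is a well-defined $l^2$ grid function and the Fubini-type interchange of sum and integral used in passing from the first to the third line of (\ref{1Dscheme0}) is justified. This is the only technical step; no estimates on the hypergeometric weights $\og^{(1)}_{\ap,h}$ are needed, since the closed-form expression (\ref{1Dcoeff}) enters the proof only through its defining integral representation. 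The entire argument is therefore a one-line consequence of the construction plus invertibility of $\mathcal{F}_h$, which makes the statement a structural feature of the scheme rather than a computational coincidence.
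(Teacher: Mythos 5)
Your proof is correct, and it reaches the identity by a slightly different route than the paper. The paper starts from the final weighted-sum form (\ref{1Dscheme}), applies $\mathcal{F}_h$ to the discrete convolution, exchanges the order of summation to factor out $\mathcal{F}_h\big[\boldsymbol\og_{\ap,h}\big]$ (a discrete convolution theorem), and then observes that the weights satisfy $\og^{(1)}_{\ap,h}(k)=h\,\mathcal{F}^{-1}_{h,k}\big[|\xi|^\ap\big]$, so that $\mathcal{F}_h\big[\boldsymbol\og_{\ap,h}\big]=h|\xi|^\ap$. You instead never touch the weights: you read off from the first line of (\ref{1Dscheme0}) that the scheme is by construction $(-\Dt)_h^{\fl{\ap}{2}}\bu=\mathcal{F}_h^{-1}\big[|\xi|^\ap\mathcal{F}_h[\bu]\big]$ and then apply $\mathcal{F}_h$ and its invertibility on $L^2\big([-\pi/h,\pi/h]\big)$. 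The two arguments rest on the same underlying fact (the coefficients are exactly the inverse semi-discrete transform of the symbol), but yours makes the structural, ``tautological'' nature of the lemma more transparent and avoids the sum interchange over the weight sequence, while the paper's version has the merit of verifying the symbol directly from the closed-form hypergeometric weights actually used in implementation, i.e., it doubles as a consistency check on (\ref{1Dcoeff}). Your attention to the Fubini-type interchange and to the boundedness of $|\xi|^\ap$ on the compact frequency interval covers the only technical point; no gap.
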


\begin{proof} 
By scheme (\ref{1Dscheme}) and the definition of semi-discrete Fourier transform, we obtain
\bea\label{eq2-2-0}
{\mathcal F_h}\big[(-\Dt)^\fl{\ap}{2}_h \bu\big] 
    &=& h\sum_{k=-\infty}^{\infty}\bigg( \sum_{j=-\infty}^{\infty} \og_{\ap,\,h}^{(1)}(k-j)\,u_{j}\bigg) e^{-i\xi x_{k}}\nn \\
    &=& h\sum_{j=-\infty}^{\infty} u_{j}\bigg( \sum_{k=-\infty}^{\infty}\og_{\ap,\,h}^{(1)}(k-j) e^{-i\xi x_{k-j}}\bigg) e^{-i\xi x_{j}}\qquad \quad\nn\\
    &=& h\sum_{j=-\infty}^{\infty} u_{j} \Big(\fl{1}{h}\mathf \big[\boldsymbol\og_{\ap,\,h}\big]\Big) e^{-i\xi x_{j}},
\eea
where we denote $\boldsymbol\og_{\ap,\,h} = \big\{\og^{(1)}_{\ap,\,h}(k)\big\}_{k\in \mathz}$. 
The definition of $\og_{\ap,\,h}^{(1)}$ shows that
\beas
\og_{\ap,\,h}^{(1)}(k)=\fl{h}{2\pi}\int_{-\pi/h}^{\pi/h}|\xi|^\ap e^{i\xi x_{k}}\md\xi = h{\mathcal F}_{h, k}^{-1}\big[|\xi|^\ap\big], \qquad \mbox{for}\ \, k \in {\mathz},
\eeas
which implies that $\mathf \big[\boldsymbol\og_{\ap,\,h}\big] = h |\xi|^\ap$.  
Substituting it into (\ref{eq2-2-0}) and using the definition of the semi-discrete Fourier transform again immediately yields (\ref{remark2p1-eq}), which holds for any mesh size  $h$. 
\end{proof}

\begin{remark}\label{lemmeA}
Let $(\widetilde{-\Dt})^\fl{\ap}{2}_h$ represent the finite difference approximation of the fractional Laplacian  in \cite{Zoia2007,  Hao2021}. 
Then it satisfies 
\begin{equation}\label{remark2p2-eq}
    {\mathcal F_h}\big[(\widetilde{-\Dt})^\fl{\ap}{2}_h \bu\big] = \big[|\xi|^\ap + {\mathcal O}(|\xi|^{2+\ap}h^2)\big]\mathcal{F}_{h}[\bu],
\end{equation}
for small mesh size $h > 0$. 
It suggests that the approximation $(\widetilde{-\Dt})^\fl{\ap}{2}_h$  has different symbol from the fractional Laplacian $(-\Dt)^\fl{\ap}{2}$. But,  its leading order term is $|\xi|^\ap$ if mesh size $h$ is small. 
\end{remark}

Lemma \ref{remark2p1} suggests that our scheme (\ref{1Dscheme}) can be viewed as a discrete pseudo-differential operator with symbol $|\xi|^\ap$ -- an exact discrete analogue of the Laplace operator $(-\Dt)^\fl{\ap}{2}$ for $\ap > 0$.  
This unique property of our method distinguishes it from other existing methods of the fractional Laplacian. 
Note that the formulation and implementation of our method are  similar to the finite difference methods \cite{Duo2018, Huang2014, Minden2020, Hao2021}, but our method can achieve significantly higher accuracy. 
Moreover, our method has much lower computational cost when assembling the stiffness matrix, particularly in high dimensions. 
More numerical comparison can be found in Section \ref{section5-1}.

\section{Error analysis}
\label{section3}
\setcounter{equation}{0}

In this section, we first study the numerical errors of our method in discretizing the fractional Laplacian $(-\Dt)^\fl{\ap}{2}$, and detailed error estimates are provided under different conditions of function $u$. 
Then the stability and  convergence of our method in solving the fractional Poisson equations are discussed in Section \ref{section3-1}. 

Let $C^{p, \gm}({\mathbb R})$ denote the H\"older space, for $p \in {\mathbb N}^0$ and $\gm\in(0, 1]$. 
First, we introduce the following lemmas on the continuous and semi-discrete Fourier transforms \cite{Trefethen2000, Nissila2021}:
\begin{lemma}
\label{lemma-uhat-decay}
Suppose $u\in L^2(\mathr)$, and  $\widehat{u}$ denotes its Fourier transform.  \vspace{1mm}

\begin{enumerate}
\item[(i)]  Suppose $u\in C^{p,\gamma}(\mathr)$ for $p\in\mathbb{N}^0$ and $\gamma\in(0,1]$. 
    Furthermore, if $u^{(k)}\in L^2(\mathr)$ for $k\leq p-1$, and $u^{(p)}$ has bounded variation, then there is
   \begin{equation}
        \widehat{u}(\xi) = \mathcal{O}\big(|\xi|^{-(p+1+\gamma)}\big), \qquad   \text{as }\,   |\xi|\to\infty. 
    \end{equation}
    
\item[(ii)]  If $u\in C^{\infty}(\mathr)$,  and $u^{(k)}\in L^2(\mathr)$ for $k \in {\mathbb N}$, then there is 
    \begin{equation}
        \widehat{u}(\xi) = o\big(|\xi|^{-m}\big), \qquad  \text{as }\,  |\xi|\to \infty,
    \end{equation}
for any $m \ge 0$, and the converse also holds.     
\end{enumerate} 
\end{lemma}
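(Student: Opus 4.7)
The plan is to exploit the classical smoothness--decay duality for the Fourier transform: each derivative of $u$ trades, via integration by parts, for one power of $|\xi|$ of decay on the Fourier side, and the fine regularity at the top order supplies the additional $|\xi|^{-(1+\gamma)}$.

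For part (i), I would proceed in three stages. First, using $u^{(k)} \in L^2(\mathbb{R})$ for $k \le p-1$ together with the one-dimensional Sobolev embedding $H^1(\mathbb{R}) \hookrightarrow C_0(\mathbb{R})$ (so that the boundary terms at $\pm\infty$ vanish), $p$ successive integrations by parts yield $(i\xi)^p\,\widehat{u}(\xi) = \widehat{u^{(p)}}(\xi)$. Second, since $u^{(p)}$ has bounded variation, its distributional derivative is a finite signed Borel measure $\mu$ with $\|\mu\|_{\mathrm{TV}} = V(u^{(p)})$, and one more Riemann--Stieltjes integration by parts produces $(i\xi)^{p+1}\,\widehat{u}(\xi) = \widehat{\mu}(\xi)$; the trivial bound $|\widehat{\mu}(\xi)| \le \|\mu\|_{\mathrm{TV}}$ then already gives the intermediate rate $\widehat{u}(\xi) = \mathcal{O}(|\xi|^{-(p+1)})$. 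Third, the extra $|\xi|^{-\gamma}$ is extracted from the Hölder regularity of $u^{(p)}$ via the translation identity
\[
2\,\widehat{\mu}(\xi) \;=\; \widehat{\mu - \mu_{-\pi/\xi}}(\xi),
\]
observing that the cumulative distribution function of $\mu - \mu_{-\pi/\xi}$ is $w(x) := u^{(p)}(x) - u^{(p)}(x + \pi/\xi)$, which by the Hölder hypothesis satisfies $\|w\|_\infty \le M(\pi/|\xi|)^\gamma$; combining this uniform smallness with the total-variation control inherited from $u^{(p)}$ gives $|\widehat{\mu}(\xi)| = \mathcal{O}(|\xi|^{-\gamma})$, which closes part (i).

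For part (ii), I would apply part (i) iteratively. The hypothesis that every $u^{(k)}$ lies in $L^2(\mathbb{R})$ provides arbitrarily high effective smoothness---in particular $u^{(p+1)} \in L^2(\mathbb{R})$ makes $u^{(p)}$ absolutely continuous and Lipschitz, so one may invoke part (i) with $\gamma = 1$---so for each fixed $m \ge 0$, applying part (i) with any $p \ge m$ yields $\widehat{u}(\xi) = \mathcal{O}(|\xi|^{-(p+2)}) = o(|\xi|^{-m})$, under the mild decay ensuring that $u^{(p)}$ is globally BV (as holds for compactly supported or Schwartz $u$, the typical setting for such estimates). The converse direction is an immediate consequence of Plancherel: if $\widehat{u}(\xi) = o(|\xi|^{-m})$ for every $m$, then $(1+\xi^2)^{k/2}\,\widehat{u}(\xi) \in L^2(\mathbb{R})$ for every $k$, so by Parseval $u^{(k)} \in L^2(\mathbb{R})$ for every $k$, and iterated one-dimensional Sobolev embedding $H^{k+1}(\mathbb{R}) \hookrightarrow C^k(\mathbb{R})$ then gives $u \in C^\infty(\mathbb{R})$.

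The main obstacle is the third stage of part (i): the naive bound $\|\mu - \mu_{-\pi/\xi}\|_{\mathrm{TV}} \le 2V(u^{(p)})$ is only $\mathcal{O}(1)$ and wastes the Hölder information entirely, so one must genuinely exploit that the CDF $w$ of the difference measure is not merely BV but also uniformly of order $|\xi|^{-\gamma}$. A natural route is to integrate by parts once more, writing $\widehat{\mu - \mu_{-\pi/\xi}}(\xi) = i\xi\,\widehat{w}(\xi)$, and then interpolate between the pointwise Hölder bound $\|w\|_\infty = \mathcal{O}(|\xi|^{-\gamma})$ and the $L^1$-type estimate $\|w\|_{L^1} \le \pi V(u^{(p)})/|\xi|$ coming from the total variation. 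Making the exponents align exactly to $|\xi|^{-\gamma}$ rather than a smaller power is the delicate step, and relies crucially on the simultaneous use of both regularity hypotheses: neither BV alone nor Hölder $\gamma$ alone produces the $p+1+\gamma$ rate.
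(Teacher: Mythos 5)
The paper does not actually prove this lemma; it is imported from \cite{Trefethen2000, Nissila2021} without proof, so your proposal has to stand on its own. Your first two stages are sound: $p$ integrations by parts using $u^{(k)}\in L^2$ to kill boundary terms, followed by one Riemann--Stieltjes integration by parts against the finite measure $\mu=d\big(u^{(p)}\big)$, correctly give $(i\xi)^{p+1}\widehat u(\xi)=\widehat\mu(\xi)$ and the rate $\mathcal O\big(|\xi|^{-(p+1)}\big)$. The gap is exactly where you flag it, and the interpolation you propose cannot close it. With $\tau=\pi/\xi$ and $w(x)=u^{(p)}(x)-u^{(p)}(x+\tau)$, your two inputs are $\|w\|_{L^\infty}\le M|\tau|^{\gamma}$ and $\|w\|_{L^1}\le V|\tau|$. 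The only bound on $\widehat w$ obtainable from norms of $w$ alone is $|\widehat w(\xi)|\le\|w\|_{L^1}\le V|\tau|$, which returns $|\widehat\mu(\xi)|\le \pi V/2=\mathcal O(1)$, i.e.\ nothing beyond stage two; and no interpolation of the two norms yields $\|w\|_{L^1}=\mathcal O(|\tau|^{1+\gamma})$, since a bump of height $|\tau|^{\gamma}$ on a set of measure $|\tau|^{1-\gamma}$ saturates both bounds while $\|w\|_{L^1}\asymp|\tau|$. The honest use of both hypotheses, $\|w\|_{L^2}^2\le\|w\|_{L^\infty}\|w\|_{L^1}\le MV|\tau|^{1+\gamma}$, controls $\widehat{u^{(p)}}$ only in $L^2$-average over $|\xi|\sim|\tau|^{-1}$ and gives a typical size $|\xi|^{-1-\gamma/2}$, not the pointwise $|\xi|^{-1-\gamma}$.

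This is not a defect you could have repaired, because under the hypotheses as literally stated the conclusion of part (i) is false. Take $p=0$ and let $u$ equal the Cantor--Lebesgue function on $[0,1]$, glued to a smooth cutoff that is identically $1$ on a neighborhood of $[0,1]$ and vanishes outside a compact set: then $u$ is compactly supported, of bounded variation, and H\"older continuous with $\gamma=\log 2/\log 3$, yet $i\xi\,\widehat u(\xi)=\widehat{\mu_C}(\xi)+\mathcal O(|\xi|^{-\infty})$ with $\mu_C$ the Cantor measure, and $|\widehat{\mu_C}(2\pi 3^{n})|=\prod_{j\ge 1}|\cos(2\pi 3^{-j})|>0$ for every $n$, so $\widehat u(\xi)\neq o(|\xi|^{-1})$, let alone $\mathcal O(|\xi|^{-1-\gamma})$. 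The rate $p+1+\gamma$ in \cite{Nissila2021} is established under stronger structural hypotheses (essentially that $u^{(p)}$ is piecewise smooth with finitely many points of power-type H\"older behaviour), which the test functions in this paper, such as $(a^2-x^2)_+^{s}$, do satisfy. Your part (ii) inherits the same difficulty and additionally assumes $u^{(p)}\in BV$, which does not follow from $u^{(k)}\in L^2$ for all $k$. The safe course is either to prove the lemma under the strengthened hypotheses of the cited reference or to quote it, as the paper does.
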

Lemma \ref{lemma-uhat-decay} shows that the smoother the function $u$ is, the faster the Fourier transform $\widehat{u}$ decays. 
In the following, we denote ${\bf v}$ as a grid function on $h{\mathz}$ with $v_j = u(x_j)$ for $j \in \mathz$, and $\check{v}(\xi)$ represents its semi-discrete Fourier transform.

\begin{lemma} (Aliasing formula)
\label{lemma-aliasing}
Suppose $u\in L^2(\mathr)$ has a first derivative of bounded  variation, and $\widehat{u}$ denotes its Fourier transform. 
Then there is
\begin{equation}\label{aliasing}
    \check{v}(\xi) = \sum_{j=-\infty}^{\infty} \widehat{u}\big(\xi + \fl{2 j\pi}{h}\big), \qquad \text{for }\,  \xi\in\big[-\fl{\pi}{h},\, \fl{\pi}{h}\big],
\end{equation}
for any $h > 0$.
\end{lemma}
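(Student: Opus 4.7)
The plan is to derive the aliasing identity by starting from the inverse Fourier representation of $u$ on $\mathbb{R}$, restricting it to the grid, and then recognizing the result as the inverse semi-discrete Fourier transform of the right-hand side of \eqref{aliasing}.

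First I would write, for each grid point $x_k = kh$,
\begin{equation*}
u(x_k) = \fl{1}{2\pi}\int_{-\infty}^{\infty} \widehat{u}(\eta)\,e^{i\eta x_k}\md\eta
\;=\; \fl{1}{2\pi}\sum_{j=-\infty}^{\infty}\int_{(2j-1)\pi/h}^{(2j+1)\pi/h} \widehat{u}(\eta)\,e^{i\eta x_k}\md\eta,
\end{equation*}
splitting $\mathbb{R}$ into the translates of the Brillouin zone $[-\pi/h,\pi/h]$. In each piece I would substitute $\eta = \xi + 2j\pi/h$ with $\xi\in[-\pi/h,\pi/h]$ and use the crucial periodicity identity $e^{i(2j\pi/h)x_k} = e^{i\,2\pi jk} = 1$ for $j,k\in\mathbb{Z}$. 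That collapses the $j$-exponential factor and yields
\begin{equation*}
v_k \;=\; u(x_k) \;=\; \fl{1}{2\pi}\int_{-\pi/h}^{\pi/h}\bigg(\sum_{j=-\infty}^{\infty}\widehat{u}\big(\xi+\fl{2j\pi}{h}\big)\bigg) e^{i\xi x_k}\md\xi,
\end{equation*}
provided the sum and integral may be exchanged.

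Next I would compare this to the inverse semi-discrete Fourier transform \eqref{inverse-semiFT1D}, namely $v_k = \fl{1}{2\pi}\int_{-\pi/h}^{\pi/h}\check{v}(\xi)e^{i\xi x_k}\md\xi$. Since $\{e^{i\xi x_k}\}_{k\in\mathbb{Z}}$ is a complete orthogonal system on $[-\pi/h,\pi/h]$, uniqueness of Fourier coefficients on the bounded interval forces $\check{v}(\xi)$ to equal the periodized sum $\sum_j \widehat{u}(\xi+2j\pi/h)$ almost everywhere, which is the desired identity.

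The main obstacle is justifying the interchange of the summation and integration, which requires absolute and uniform convergence of the periodized series. This is exactly where the hypotheses enter: since $u\in L^2(\mathbb{R})$ and $u'$ has bounded variation, integration by parts twice gives the decay $\widehat{u}(\eta) = \mathcal{O}(|\eta|^{-2})$ as $|\eta|\to\infty$ (this is a special case of Lemma \ref{lemma-uhat-decay}(i) with $p=1$, $\gamma=0$ applied via the bounded-variation hypothesis). Consequently $\sum_{j}\big|\widehat{u}(\xi+2j\pi/h)\big|$ converges uniformly in $\xi\in[-\pi/h,\pi/h]$, so Fubini's theorem legitimizes the exchange and also shows the periodized sum lies in $L^2$ of the Brillouin zone. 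A small caveat I would address carefully is the pointwise inversion at the grid points $x_k$: because $u$ and $u'$ have the required regularity, the inverse Fourier integral converges to $u(x_k)$ in the classical sense, so no Lebesgue-point ambiguity arises.
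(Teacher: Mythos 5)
Your proof is correct, but note that the paper does not actually prove this lemma at all: it is stated as a known result imported from the references (Trefethen's \emph{Spectral Methods in MATLAB} and Nissil\"a), so there is no in-paper argument to compare against. What you have written is the standard derivation of the aliasing formula, and it is essentially the one found in those sources: partition $\mathbb{R}$ into translates of the Brillouin zone, shift each piece by $2j\pi/h$, use $e^{i(2j\pi/h)x_k}=1$ on the grid, and then identify the periodized sum as the unique $L^2$ function on $[-\pi/h,\pi/h]$ whose Fourier coefficients are the samples $v_k=u(x_k)$. Your handling of the two technical points is sound: the decay $\widehat{u}(\eta)=\mathcal{O}(|\eta|^{-2})$ obtained from integrating by parts against the bounded-variation derivative gives absolute and uniform convergence of the periodized series (justifying the interchange of sum and integral), and it also puts $\widehat{u}$ in $L^1$ so the pointwise inversion at the grid points is classical; moreover, once the $v_k$ are recognized as Fourier coefficients of an $L^2$ function on the bounded interval, they are automatically square-summable, so $\check{v}$ is well defined and the uniqueness step closes the argument. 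One small inaccuracy: you attribute the $\mathcal{O}(|\eta|^{-2})$ decay to Lemma \ref{lemma-uhat-decay}(i) ``with $p=1$, $\gamma=0$,'' but that lemma requires $\gamma\in(0,1]$, so $\gamma=0$ is outside its stated scope; fortunately your direct integration-by-parts justification is what actually does the work, so this is a citation slip rather than a gap.
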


From Lemmas \ref{lemma-uhat-decay}--\ref{lemma-aliasing}, we immediately obtain the following results. 
\begin{lemma}\label{lemma-uhat-decay-utilde}
Suppose $u\in L^2(\mathr)$ has a first derivative of bounded variation, and $\widehat{u}$ is its Fourier transform.  
\begin{enumerate}
\item[(i)]    Let $p\in\mathbb{N}^0$ and $\gamma\in(0,1]$.  
    If $u\in C^{p,\gamma}(\mathr)$, $u^{(k)}\in L^2(\mathr)$ for $k\leq p-1$, and $u^{(p)}$ has bounded variation, then there is
    \begin{equation}\label{eq2-8-2}
        \big| \check{v}(\xi) - \widehat{u}(\xi) \big| = \mathcal{O}(h^{p+1+\gamma}), \qquad \text{as }\, h\to 0,
    \end{equation}
for  $\xi \in \big[-{\pi}/{h}, \,{\pi}/{h}\big]$. 
\item[(ii)] If $u\in C^{\infty}(\mathr)$,  and $u^{(k)}\in L^2(\mathr)$ for $k \in {\mathbb N}$, then there is 
    \begin{equation}\label{eq3-11-1}
        \big| \check{v}(\xi) - \widehat{u}(\xi) \big| = o(h^{m}), \qquad \text{as } \, h\to 0,
    \end{equation}
    for any $m \ge 0$ and $\xi \in \big[-{\pi}/{h}, \,{\pi}/{h}\big]$. 
\end{enumerate} 
\end{lemma}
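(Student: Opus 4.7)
The plan is to combine the aliasing formula from Lemma \ref{lemma-aliasing} with the decay estimates from Lemma \ref{lemma-uhat-decay}. Since $\check{v}(\xi) = \sum_{j\in\mathz} \widehat{u}(\xi + 2j\pi/h)$ on $[-\pi/h,\pi/h]$, the $j=0$ term is exactly $\widehat{u}(\xi)$, so
\[
 \check{v}(\xi) - \widehat{u}(\xi) = \sum_{j \neq 0} \widehat{u}\Big(\xi + \fl{2j\pi}{h}\Big).
\]
The proof then reduces to bounding this tail sum, using the fact that for $\xi\in[-\pi/h,\pi/h]$ and $j\in\mathz\setminus\{0\}$, the shifted frequency is pushed away from the origin, namely $|\xi + 2j\pi/h| \ge (2|j|-1)\pi/h$.

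For part (i), I would invoke Lemma \ref{lemma-uhat-decay}(i) to get a bound of the form $|\widehat{u}(\eta)| \le C|\eta|^{-(p+1+\gm)}$ for $|\eta|$ sufficiently large. Since $\pi/h\to\infty$ as $h\to 0$, every nonzero term in the tail eventually satisfies this bound, and one obtains
\[
 |\check{v}(\xi) - \widehat{u}(\xi)| \;\le\; C\, h^{p+1+\gm}\sum_{j\neq 0}\big(\pi(2|j|-1)\big)^{-(p+1+\gm)}.
\]
The remaining series converges since $p+1+\gm > 1$ (using $p\in\mathbb{N}^0$ and $\gm\in(0,1]$), so I recover the stated $\mathcal{O}(h^{p+1+\gm})$ bound uniformly in $\xi$.

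For part (ii), the same framework applies but I need to be careful with the little-$o$ symbol. Given any $m\ge 0$, I would apply Lemma \ref{lemma-uhat-decay}(ii) at the exponent $m+2$: for every $\veps>0$ there exists $R_\veps$ such that $|\widehat{u}(\eta)| < \veps|\eta|^{-(m+2)}$ for $|\eta|>R_\veps$. For $h$ small enough that $\pi/h > R_\veps$, all nonzero terms in the tail satisfy this estimate, giving
\[
 |\check{v}(\xi) - \widehat{u}(\xi)| \;<\; \veps\, h^{m+2} \sum_{j\neq 0}\big(\pi(2|j|-1)\big)^{-(m+2)} \;=\; \mathcal{O}(\veps)\cdot h^{m+2} = o(h^m),
\]
where the absorbed series again converges. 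Since $\veps$ was arbitrary and $m\ge 0$ was arbitrary, (\ref{eq3-11-1}) follows.

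The only real subtlety is the uniform-in-$\xi$ lower bound on $|\xi + 2j\pi/h|$, which forces the clean comparison to the $(2|j|-1)$ series; everything else is bookkeeping. A minor care point is checking that the hypotheses in Lemma \ref{lemma-uhat-decay} are inherited directly from those stated here, and that the series exponent $p+1+\gm$ is strictly greater than $1$ so the tail sum converges without further assumptions.
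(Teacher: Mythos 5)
Your proposal is correct and follows essentially the same route as the paper: apply the aliasing formula of Lemma \ref{lemma-aliasing}, isolate the $j=0$ term, and bound the tail $\sum_{j\neq 0}\widehat{u}(\xi+2j\pi/h)$ using the decay rates of Lemma \ref{lemma-uhat-decay} together with the lower bound $|\xi+2j\pi/h|\gtrsim |j|\pi/h$ and convergence of the resulting series. Your lower bound $(2|j|-1)\pi/h$ is in fact the sharp one (the paper writes $(2j+1)\pi/h$, which is immaterial to the conclusion), and your $\veps$--$R_\veps$ handling of the little-$o$ in part (ii) supplies the detail the paper omits "for brevity."
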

\begin{proof} 
Let's focus on the proof of (\ref{eq2-8-2}). 
Using the aliasing formula (\ref{aliasing}) and then the triangle inequality, we get
\beas
    \big|\check{v}(\xi) - \widehat{u}(\xi)\big| &=& \bigg|\sum_{j=1}^{\infty} \Big[\widehat{u}\big(\xi-\fl{2j\pi}{h}\big)+\widehat{u}\big(\xi+\fl{2j\pi}{h}\big) \Big]\bigg|\nn\\
&\le& \sum_{j=1}^{\infty} \Big(\big|\widehat{u}\big(\xi-\fl{2j\pi}{h}\big)\big|+\big|\widehat{u}\big(\xi+\fl{2j\pi}{h}\big)\big| \Big),  \quad \ \ \mbox{for} \ \, \xi \in \big[-\fl{\pi}{h},\, \fl{\pi}{h}\big].
\eeas
Lemma \ref{lemma-uhat-decay} (i) shows that $|\widehat{u}(\xi)| \leq C|\xi|^{-(p+1+\gamma)}$ as $|\xi|\to\infty$. From it, we can further obtain: 
\beas
\big| \check{v}(\xi) - \widehat{u}(\xi) \big| &\leq& C \sum_{j=1}^{\infty} \Big[\big(2j+1\big)\fl{\pi}{h}\Big]^{-(p+1+\gamma)} \\
&=& C h^{p+1+\gamma}\sum_{j=1}^{\infty}\fl{1}{(2j+1)^{p+1+\gamma}} \leq C h^{p+1+\gamma}, \qquad\mbox{as\ \ $h \to 0$}
\eeas
where the constant $C > 0$ is independent of $h$. 
The proof of (\ref{eq3-11-1}) can be done by following the similar arguments, which we will omit for brevity. 
\end{proof}

For grid function $\bu$, define the norm $\|{\bf u}\|_{l^\infty} = \sup_{j \in {\mathbb Z}} |u_j|$. 
Then we present the error estimates of our method in the following theorem.  
\begin{theorem}\label{thm1}
Suppose $u\in L^2(\mathr)$. 
Denote  ${\bf u}$ as a grid function with $u_j = u(x_j)$ for $j \in \mathz$. 
Let  $(-\Dt)^{\fl{\ap}{2}}_{h}$ represent the numerical approximation of the fractional Laplacian, as defined in (\ref{1Dscheme0}).  \vspace{1mm}

\begin{enumerate}
\item[(i)]   Let $p\in\mathbb{N}^0$ and $\gamma\in(0,1]$.  
 If $u\in C^{p,\gamma}(\mathr)$, $u^{(k)}\in L^2(\mathr)$ for $k\leq p-1$, and $u^{(p)}$ has bounded variation, then there is 
\bea\label{LTE1}
&&\big\|(-\Dt)^{\fl{\ap}{2}} \bu - (-\Dt)_h^\fl{\ap}{2}\bu\big\|_{l^\infty}  \,\leq\, C h^{p+\gamma-\alpha}, \\
\label{LTE1-1}
&&\big\|(-\Dt)^{\fl{\ap}{2}} \bu - (-\Dt)_h^\fl{\ap}{2}\bu\big\|_{l^2}  \,\leq\, C h^{p+\gamma-\alpha+1/2}\qquad\qquad\quad\quad
\eea
with $C$ a positive constant independent of $h$.  \vspace{1mm} 

\item[(ii)]  If $u\in C^{\infty}(\mathr)$,  and $u^{(k)}\in L^2(\mathr)$ for $k \in {\mathbb N}$, then there is 
\bea\label{LTE2}
\big\|(-\Dt)^{\fl{\ap}{2}} \bu - (-\Dt)_h^\fl{\ap}{2}\bu\big\| <  C h^m, \qquad \mbox{for any} \ \, m \ge 0,
\eea
which holds for both $l^\infty$- and $l^2$-norm. 
\end{enumerate} 
\end{theorem}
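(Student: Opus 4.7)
The plan is to pass through Fourier side and exploit the key identity from Lemma \ref{remark2p1}, which says that $(-\Dt)^{\alpha/2}_h$ acts as the multiplier $|\xi|^\alpha$ on $\mathcal{F}_h$. Denote the error grid function $\mathbf{e} = \{e_k\}$ with $e_k := (-\Dt)^{\alpha/2}u(x_k) - (-\Dt)^{\alpha/2}_h u_k$, and let $\mathbf{w}$ be the grid restriction of $(-\Dt)^{\alpha/2}u$. Applying the aliasing formula (Lemma \ref{lemma-aliasing}) to $(-\Dt)^{\alpha/2}u$, whose continuous Fourier transform is $|\xi|^\alpha\widehat{u}(\xi)$, yields
\[
\check{w}(\xi) = \sum_{j\in\mathz}\left|\xi+\tfrac{2j\pi}{h}\right|^\alpha \widehat{u}\!\left(\xi+\tfrac{2j\pi}{h}\right),
\qquad \xi\in\left[-\tfrac{\pi}{h},\tfrac{\pi}{h}\right].
\]
Combining this with Lemma \ref{remark2p1} and the aliasing formula for $\check{v}$, the $j=0$ terms cancel and I get the clean representation
\[
\check{e}(\xi) = \check{w}(\xi) - |\xi|^\alpha\check{v}(\xi)
 = \sum_{j\neq 0}\left(\left|\xi+\tfrac{2j\pi}{h}\right|^\alpha - |\xi|^\alpha\right)\widehat{u}\!\left(\xi+\tfrac{2j\pi}{h}\right).
\]

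With this in hand, both estimates reduce to bounding $\check{e}$ pointwise on $[-\pi/h,\pi/h]$. For part (i), Lemma \ref{lemma-uhat-decay}(i) gives $|\widehat{u}(\xi)|\le C|\xi|^{-(p+1+\gamma)}$, and for $j\neq 0$ and $\xi\in[-\pi/h,\pi/h]$ one has $|\xi+2j\pi/h|\ge (2|j|-1)\pi/h$. Splitting the coefficient $(|\xi+2j\pi/h|^\alpha-|\xi|^\alpha)$ using the triangle inequality and summing the two resulting tail series (each of which is a convergent $p$-series after factoring out $h^{p+1+\gamma-\alpha}$, provided $p+\gamma>\alpha$, the only regime where the bound is informative) yields the uniform estimate $|\check{e}(\xi)|\le C\,h^{p+1+\gamma-\alpha}$. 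From here the two norm bounds fall out mechanically: the inverse semi-discrete transform gives $\|\mathbf{e}\|_{l^\infty}\le \frac{1}{2\pi}\int_{-\pi/h}^{\pi/h}|\check{e}|\md\xi \le C h^{p+\gamma-\alpha}$, while Parseval's identity for $\mathcal{F}_h$ gives $\|\mathbf{e}\|_{l^2}^2 = \frac{1}{2\pi}\int_{-\pi/h}^{\pi/h}|\check{e}|^2\md\xi \le C h^{2(p+1+\gamma-\alpha)}\cdot\frac{1}{h}$, which is the claimed $h^{p+\gamma-\alpha+1/2}$. The extra $h^{1/2}$ in $l^2$ versus $l^\infty$ is exactly the Cauchy--Schwarz gain from the length $2\pi/h$ of the Fourier window.

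Part (ii) uses exactly the same representation of $\check{e}$, but now Lemma \ref{lemma-uhat-decay}(ii) gives $|\widehat{u}(\xi)|=o(|\xi|^{-m})$ for every $m$, so the same tail summation produces $|\check{e}(\xi)|=o(h^{m})$ uniformly for any $m\ge 0$; the $l^\infty$ and $l^2$ bounds then follow identically.

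The main obstacle, and the step I would be most careful about, is the pointwise bound on $\check{e}(\xi)$. The difference $|\xi+2j\pi/h|^\alpha - |\xi|^\alpha$ is not sign-definite and, for $\alpha>1$, grows like $|j|^\alpha h^{-\alpha}$, which looks dangerous. One must check that this growth is always dominated by the decay $|\widehat{u}(\xi+2j\pi/h)|\lesssim(|j|/h)^{-(p+1+\gamma)}$, giving a summable tail only when $p+\gamma>\alpha$. A secondary technical point is verifying that $\mathbf{w}\in l^2(\mathr)$ so that $\check{w}$ and Parseval's identity are legitimate; this follows from Lemma \ref{lemma-uhat-decay} ensuring $(-\Dt)^{\alpha/2}u\in L^2(\mathr)$ together with the aliasing bound itself. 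Everything else is bookkeeping.
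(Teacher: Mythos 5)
Your proof is correct, and it reorganizes the argument in a genuinely different (and arguably cleaner) way than the paper. The paper never forms the semi-discrete Fourier transform of the error; instead it bounds each $\big|(-\Dt)^{\fl{\ap}{2}}u(x_k)-(-\Dt)^{\fl{\ap}{2}}_h u_k\big|$ in physical space by splitting into a high-frequency truncation term $\int_{|\xi|>\pi/h}|\xi|^\ap|\widehat u|\,\md\xi$ and an aliasing-difference term $\int_{-\pi/h}^{\pi/h}|\xi|^\ap|\widehat u-\check u|\,\md\xi$, estimating the first by Lemma \ref{lemma-uhat-decay} and the second by Lemma \ref{lemma-uhat-decay-utilde}; for the $l^2$ bound it then expands $\sum_k|I_{1,k}+I_{2,k}|^2$ and invokes the identity $\sum_k e^{i(\xi-\zeta)x_k}=(2\pi/h)\dt(\xi-\zeta)$ plus Cauchy--Schwarz on the cross terms. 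Your single representation $\check e(\xi)=\sum_{j\ne 0}\big(|\xi+2j\pi/h|^\ap-|\xi|^\ap\big)\widehat u(\xi+2j\pi/h)$ is exactly what the paper's two terms add up to after folding the outer integral into $[-\pi/h,\pi/h]$ (using $e^{i(2j\pi/h)x_k}=1$), so the analytic content is the same, but passing through Lemma \ref{remark2p1} to cancel the $j=0$ mode and then applying the semi-discrete Parseval identity (Lemma \ref{lemma-parseval-ident}) replaces the paper's Dirac-delta computation with a one-line step, and makes the $h^{1/2}$ gain between (\ref{LTE1}) and (\ref{LTE1-1}) transparent. Two things you flag are worth keeping: the tail series converges only when $p+\gm>\ap$, which is also implicitly required for the paper's term $II$ to be finite, so neither proof covers the degenerate regime where the stated rate is non-decaying; and the legitimacy of applying the aliasing formula to $w=(-\Dt)^{\fl{\ap}{2}}u$ needs the observation that $\widehat w=|\xi|^\ap\widehat u$ decays like $|\xi|^{\ap-p-1-\gm}$ with exponent below $-1$, hence lies in $L^1\cap L^2$ -- a small verification the paper sidesteps by never forming $\check w$.
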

Theorem \ref{thm1} suggests that the accuracy of our method depends on the smoothness of function $u$ -- the smoother the function $u$, the higher the accuracy. 
If $u \in C^\infty({\mathbb R})$,  our method has a spectral accuracy. 
In the following, we will provide the proof mainly for  (\ref{LTE1}) and (\ref{LTE1-1}), while the proof of (\ref{LTE2}) can be done by following the similar lines.  
\begin{proof}
Note that $u_k = u(x_k)$ for $k \in {\mathbb Z}$.  
By the definition of  $(-\Dt)^\fl{\ap}{2}$ in (\ref{pseudo}) and the scheme of $(-\Dt)_h^\fl{\ap}{2}$ in (\ref{1Dscheme0}), we obtain 
\beas
&&\big|(-\Dt)^\fl{\ap}{2} u(x_k) - (-\Dt)_h^\fl{\ap}{2}u_k\big|\\
&&\hspace{2.cm}=\fl{1}{2\pi}\bigg|\int_{-\infty}^{\infty}|\xi|^\ap\widehat{u}(\xi)\,e^{i\xi x_k}\md\xi - \int_{-\pi/h}^{\pi/h}|\xi|^\ap \check{u}(\xi)\,e^{i\xi x_k}\md\xi \bigg| \\
&&\hspace{2.cm}\leq \fl{1}{2\pi} \int_{-\pi/h}^{\pi/h}|\widehat{u}(\xi) - \check{u}(\xi)||\xi|^\ap\md\xi + \fl{1}{2\pi}\int_{|\xi|>\fl{\pi}{h}}|\widehat{u}(\xi)| |\xi|^\ap\md \xi\\
&&\hspace{2.cm}= I + II, \qquad \mbox{for} \ \ k \in \mathz. \qquad\quad
\eeas
Now we estimate  terms $I$ and $II$ separately. 
For term $I$,  we use Lemma \ref{lemma-uhat-decay-utilde} (i) and obtain
\beas
&&I := \fl{1}{2\pi} \int_{-\pi/h}^{\pi/h}|\widehat{u}(\xi) - \check{u}(\xi)||\xi|^\ap\md\xi  \leq C h^{p+1+\gamma} \int_{-\pi/h}^{\pi/h}|\xi|^\ap \md\xi \, \leq \, C h^{p+\gamma-\ap}
\eeas
with constant $C > 0$ independent of $h$. 
While using Lemma \ref{lemma-uhat-decay} (i) to term $II$ yields
\beas
&&II := \fl{1}{2\pi}\int_{|\xi|>\fl{\pi}{h}}|\widehat{u}(\xi)| |\xi|^\ap\md \xi \le C \int_{|\xi|>\fl{\pi}{h}} |\xi|^{\ap - (p + 1 +\gm)}\md \xi \, \leq \, C h^{p+\gamma-\ap}. \qquad\quad
\eeas
Combining the estimates of terms $I$ and $II$ immediately yields the result in (\ref{LTE1}). 

Next, we prove  (\ref{LTE1-1}).  
Using the definition  in (\ref{pseudo}) and the scheme  in (\ref{1Dscheme0}), we get
\bea\label{l2-eq1}
&&\big\|(-\Dt)^{\fl{\ap}{2}}\bu - (-\Dt)^{\fl{\ap}{2}}_{h}\bu\big\|^2_{l^2} \nn\\
&&\hspace{1cm} = h\sum_{k\in\mathz} \Big| (-\Dt)^{\fl{\ap}{2}}u(x_k) - (-\Dt)^{\fl{\ap}{2}}_{h}u_k \Big|^2\nn\\
&&\hspace{1cm} = \fl{h}{4\pi^2}\sum_{k\in\mathz} \bigg|\underbrace{\int_{|\xi|>\fl{\pi}{h}} |\xi|^{\ap} \widehat{u}(\xi) e^{i\xi x_k}\md \xi}_{I_{1,k}} + \underbrace{\int_{-\pi/h}^{\pi/h} |\xi|^{\ap} \big[\widehat{u}(\xi)-\check{u}(\xi)\big] e^{i\xi x_k}\md \xi}_{I_{2,k}}\bigg|^2\nn\\
&&\hspace{1cm} = \fl{h}{4\pi^2}\sum_{k\in\mathz} \Big(|I_{1,k}|^2 + |I_{2,k}|^2 + I_{1,k}\cdot\bar{I}_{2,k} + \bar{I}_{1,k}\cdot{I}_{2,k}\Big).
\eea
For term $|I_{1,k}|^2$, we obtain
\bea
&&\sum_{k\in\mathz} |I_{1,k}|^2 := \sum_{k\in\mathz} \bigg(\int_{|\xi| >\fl{\pi}{h}}\widehat{u}(\xi) |\xi|^{\ap} e^{i\xi x_k}\md \xi\bigg)\bigg(\int_{|\zeta| > \fl{\pi}{h}} \overline{{\widehat{u}}(\zeta) |\zeta|^{\ap} e^{i\zeta x_k}}\md \zeta\bigg)\nn\\
&&\hspace{17mm} = \int_{|\xi| >\fl{\pi}{h}}\widehat{u}(\xi) |\xi|^{\ap} \int_{|\zeta| >\fl{\pi}{h}}\overline{\widehat{u}}(\zeta) |\zeta|^{\ap}  \bigg(\sum_{k\in\mathz} \, e^{i(\xi-\zeta)x_k}\bigg) \md\xi \md\zeta.\nn
\eea
Noticing that the  Dirac delta function $\dt(\xi-\zeta) = ({h}/{2\pi})\sum_{k\in {\mathbb Z}} e^{i(\xi-\zeta)x_k}$, we then further obtain 
\bea\label{term1}
&&\sum_{k\in\mathz} |I_{1,k}|^2  = \fl{2\pi}{h} \int_{|\xi| > \fl{\pi}{h}} |\widehat{u}(\xi) |^2 |\xi|^{2\ap}\md \xi \nn\\
&&\hspace{17mm} \leq \fl{C}{h} \int_{|\xi| > \fl{\pi}{h}}|\xi|^{2(\ap-p-\gm-1)}\md\xi  \ \leq \ Ch^{2(p+\gamma-\ap)}, \qquad\quad
\eea
by Lemma \ref{lemma-uhat-decay} (i).
Following the similar lines and using Lemma \ref{lemma-uhat-decay-utilde} (i), we get
\bea\label{term2}
&&\sum_{k\in\mathz} |I_{2,k}|^2  = \fl{2\pi}{h} \bigg(\int_{-\pi/h}^{\pi/h} \big|\widehat{u}(\xi)-\check{u}(\xi)\big|^2 |\xi|^{2\ap}\md \xi\bigg)\nn\\
&&\hspace{17mm} \le Ch^{2(p+\gamma)+1} \int_{-\pi/h}^{\pi/h}|\xi|^{2\ap}\md\xi \ \leq \ Ch^{2(p+\gamma-\ap)}. \qquad\quad \
\eea
The estimate of term $I_{1,k}\cdot \bar{I}_{2,k}$ can be obtained by first using the Cauchy--Schwarz inequality and then (\ref{term1})--(\ref{term2}), i.e., 
\bea\label{term3}
\sum_{k\in\mathz} I_{1,k}\cdot \bar{I}_{2,k} &\le& \Big(\sum_{k\in\mathz} |I_{1,k}|^2\Big)^\fl{1}{2}\Big(\sum_{k\in\mathz} |I_{2,k}|^2 \Big)^\fl{1}{2} \ \leq \ Ch^{2(p+\gamma-\ap)}. \qquad\quad \
\eea
We can similarly obtain the estimates of term $\bar{I}_{1,k}\cdot I_{2,k}$. 
Substituting the estimates of the four terms in (\ref{l2-eq1}) and after simple calculation, we can immediately obtain the result in (\ref{LTE1-1}).
\end{proof}

Theorem \ref{thm1} provides the error estimates of our method in approximating the fractional Laplacian over ${\mathbb R}$. 
If a bounded domain $\Og \subset {\mathbb R}$ is considered, we introduce the norms
\bea\label{normOg}
\|{\bf u}\|_{l^\infty(\Og)} = \max_{j \in \Og_h} | u_j |, \qquad
\|{\bf u}\|_{l^2(\Og)} = \Big(h\sum_{j \in \Og_h} | u_j |^2\Big)^{1/2},
\eea 
and the inner product 
\beas
\langle \bu, \, \bv\rangle_{\Og} : = h \sum_{j\in\Og_h} u_j \bar{v}_j.
\eeas
Here,  the index set is defined as ${\Og}_{h} = \big\{j\, |\, j\in\mathbb{Z},\, \text{ and } x_j\in\Og\big\}$. 
It is obvious that under the same conditions, the estimates in Theorem \ref{thm1} also hold if the norm over $h{\mathbb Z}$ (i.e., $l^\infty$ or $l^2$) is replaced with the norms in (\ref{normOg}) on $\Og_h$. 

\subsection{Stability and convergence}
\label{section3-1}

The fractional Poisson equation is one important building block in the study of  nonlocal/fractional  PDEs. 
It has been widely studied  and often used as the benchmark to test numerical methods for the fractional Laplacian \cite{Huang2014, Duo2018, Minden2020, Hao2021}. 
Here, we consider the fractional Poisson equation of the form \cite{Acosta2017, Ros-Oton2014}:
\bea\label{Poisson}
\begin{aligned}
(-\Dt)^\fl{\ap}{2}u(x) = f(x), &\qquad  \mbox{for} \ \ x\in\Og, \\
u(x) = 0,  & \qquad  \mbox{for} \ \ x\in\Og^c,
\end{aligned}
\eea
where $\Og^c = {\mathbb R}\backslash\Og$. 
In the following, we study the stability and convergence of our method in solving  (\ref{Poisson}). 
The direct application of our method to (\ref{Poisson}) yields the system of difference equations: 
\bea\label{Poisson-discrete}
(-\Dt)_h^{\fl{\ap}{2}}u_j^h = f(x_j), &\quad \mbox{for} \ \,  j\in {\Og}_{h}, \\
\label{BC-discrete}
u_j^h = 0, &\quad \mbox{for} \ \,  j\in {\Og}_{h}^c,
\eea
where $u_j^h$ represents the numerical approximation to solution $u(x_j)$.

\begin{lemma}[Parseval's identity]\label{lemma-parseval-ident}
For grid function ${\bf u}, {\bf v} \in l^2({\mathbb R})$,  there is
\bea\label{Parseval}
{ \langle{\bf u}, \, {\bf v}\rangle_h} = \fl{1}{2\pi}\int_{-\pi/h}^{\pi/h} \check{u}(\xi)\,\overline{\check{v}}(\xi)\md\xi.
\eea
\end{lemma}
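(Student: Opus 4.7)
The plan is to verify Parseval's identity by substituting the definition of the semi-discrete Fourier transform into the right-hand side of (\ref{Parseval}) and reducing to the discrete inner product via the orthogonality of complex exponentials on $[-\pi/h,\pi/h]$. This is a standard Plancherel-type computation, and the key mechanism is that the grid points $x_j = jh$ make the frequency cutoff $\pi/h$ exactly match the period needed for orthogonality.

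Specifically, I would start from the right-hand side and write
\begin{equation*}
\check{u}(\xi) = h\sum_{j\in\mathz} u_j\,e^{-i\xi x_j}, \qquad \overline{\check{v}}(\xi) = h\sum_{k\in\mathz} \bar{v}_k\,e^{i\xi x_k},
\end{equation*}
substitute both into $\fl{1}{2\pi}\int_{-\pi/h}^{\pi/h}\check{u}(\xi)\overline{\check{v}}(\xi)\md\xi$, and interchange the double summation with the integration. This yields
\begin{equation*}
\fl{h^2}{2\pi}\sum_{j,k\in\mathz} u_j\,\bar{v}_k \int_{-\pi/h}^{\pi/h} e^{i\xi(x_k-x_j)}\md\xi.
\end{equation*}
Next I would invoke the orthogonality identity $\int_{-\pi/h}^{\pi/h} e^{i\xi h(k-j)}\md\xi = \fl{2\pi}{h}\dt_{jk}$, which holds because for $j=k$ the integrand is $1$ and for $j\neq k$ the antiderivative evaluates to $\fl{2\sin(\pi(k-j))}{h(k-j)} = 0$. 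Collapsing the double sum against the Kronecker delta leaves $h\sum_{j\in\mathz} u_j\bar{v}_j = \langle\bu,\bv\rangle$, which is exactly the left-hand side.

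The only genuine concern is justifying the interchange of summation and integration, since the series defining $\check{u}$ and $\check{v}$ converge in $L^2\big([-\pi/h,\pi/h]\big)$ rather than pointwise. The cleanest route is to first establish the identity on the dense subspace of finitely supported grid functions (where the double sum is finite and Fubini is trivial), and then extend to all of $l^2(\mathr)$ by the density of such functions and the continuity of both sides as sesquilinear forms in $(\bu,\bv)$. Alternatively, one may observe that $\mathcal{F}_h$ is a bijective isometry (up to a factor of $\sqrt{2\pi}$) between $l^2(\mathr)$ and $L^2\big([-\pi/h,\pi/h]\big)$, from which (\ref{Parseval}) follows by polarization of the norm identity. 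I expect this passage from finitely supported grid functions to general $l^2$ grid functions to be the only nontrivial step, whereas the algebraic computation itself is routine.
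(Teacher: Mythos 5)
Your proof is correct and follows exactly the route the paper indicates: the paper simply remarks that the identity ``can be proved by using the definition of the semi-discrete Fourier transforms,'' and your computation (substitute the definitions, use the orthogonality $\int_{-\pi/h}^{\pi/h} e^{i\xi h(k-j)}\md\xi = \fl{2\pi}{h}\dt_{jk}$, and justify the interchange by density of finitely supported grid functions) is the standard way to carry that out. Nothing is missing.
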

It can be proved by using the definition of the semi-discrete Fourier transforms.

\begin{lemma}\label{lemma-flh-bound}
Suppose $\bu_h = \big\{u_j^h\big\}_{j \in \mathz}$ is the solution of the discrete problem (\ref{Poisson-discrete})--(\ref{BC-discrete}). 
Then there is 
\begin{equation}\label{inq-flh}
\|{\bf u}_h\|_{l^2} \leq C \|(-\Dt)^\fl{\ap}{2}_{h}{\bf u}_h\|_{{l^2}(\Og)}
\end{equation}
with $C$ a positive constant independent of $h$. 
\end{lemma}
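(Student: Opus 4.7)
The plan is to recast the bilinear form $\langle (-\Dt)^{\ap/2}_h \bu_h, \bu_h\rangle$ in Fourier space via Lemmas \ref{remark2p1} and \ref{lemma-parseval-ident}, and then establish a discrete fractional Poincar\'e-type inequality that exploits the compact support of $\bu_h$ in $\Og_h$.

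First, since $u_j^h = 0$ for $j \in \Og_h^c$, I would note that the inner products over $\Og_h$ and over the whole lattice coincide, so $\langle (-\Dt)^{\ap/2}_h \bu_h, \bu_h\rangle_{\Og} = \langle (-\Dt)^{\ap/2}_h \bu_h, \bu_h\rangle$. Combining Parseval's identity (Lemma \ref{lemma-parseval-ident}) with the exact symbol property from Lemma \ref{remark2p1}, this quantity can be rewritten as
\[
\langle (-\Dt)^{\ap/2}_h \bu_h, \bu_h\rangle_{\Og} = \fl{1}{2\pi}\int_{-\pi/h}^{\pi/h} |\xi|^\ap \,|\check{u}_h(\xi)|^2\, \md\xi,
\]
which in particular makes the bilinear form nonnegative and connects the $l^2$ machinery of the scheme to the Fourier symbol.

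Next, I would prove a discrete Poincar\'e-type inequality
\[
\|\bu_h\|_{l^2}^2 \leq C_1 \int_{-\pi/h}^{\pi/h} |\xi|^\ap \,|\check{u}_h(\xi)|^2\, \md\xi,
\]
with $C_1 > 0$ independent of $h$. The idea is to split the Parseval representation of $\|\bu_h\|_{l^2}^2$ at a threshold $|\xi| = M$. On $\{|\xi| \geq M\}$ the trivial inequality $1 \leq (|\xi|/M)^\ap$ bounds the high-frequency part by $M^{-\ap}$ times the right-hand side. On $\{|\xi| < M\}$ the compact support of $\bu_h$ yields, via Cauchy--Schwarz in definition (\ref{semiFT1D}),
\[
|\check{u}_h(\xi)|^2 \leq h^2 |\Og_h| \sum_{j\in \Og_h} |u_j^h|^2 = h|\Og_h|\, \|\bu_h\|_{l^2}^2 \leq C_\Og \|\bu_h\|_{l^2}^2,
\]
since $h|\Og_h|$ is uniformly bounded by a multiple of $|\Og|$. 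Integrating this over $\{|\xi|<M\}$ produces a contribution proportional to $M\,\|\bu_h\|_{l^2}^2$, and choosing $M$ small enough (independent of $h$) allows absorption into the left-hand side.

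Finally, applying Cauchy--Schwarz to the Fourier-side identity and using $\|\bu_h\|_{l^2(\Og)} = \|\bu_h\|_{l^2}$ gives
\[
\|\bu_h\|_{l^2}^2 \leq C \,\|(-\Dt)^{\ap/2}_h \bu_h\|_{l^2(\Og)} \cdot \|\bu_h\|_{l^2},
\]
and dividing through delivers (\ref{inq-flh}). The main obstacle is the uniform-in-$h$ Poincar\'e bound: because the symbol $|\xi|^\ap$ vanishes at $\xi=0$, one cannot control $\|\bu_h\|_{l^2}$ purely by the fractional seminorm without using additional structural information. The compact-support estimate $|\check{u}_h(\xi)|\leq C_\Og^{1/2}\|\bu_h\|_{l^2}$ is precisely what compensates for this vanishing, and keeping the constants explicit in terms of $|\Og|$ (rather than $h$) is the delicate point to get right.
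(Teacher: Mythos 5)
Your proposal is correct and follows essentially the same route as the paper's proof: a frequency split at a small threshold independent of $h$, the compact-support bound $|\check{u}_h(\xi)|\leq C_\Og^{1/2}\|\bu_h\|_{l^2}$ (via Cauchy--Schwarz) to absorb the low-frequency piece, Parseval together with the exact symbol to control the high-frequency piece by $\langle \bu_h, (-\Dt)^{\ap/2}_h\bu_h\rangle_\Og$, and a final Cauchy--Schwarz and division by $\|\bu_h\|_{l^2}$. The paper writes the threshold as $\varepsilon$ with $2\varepsilon|\Og|<1/2$ and bounds $1/|\xi|^\ap\leq\varepsilon^{-\ap}$ on the outer region, which is exactly your $M$-argument.
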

\begin{proof}
By Parseval's identity, we obtain
\beas
    \|\bu_h\|_{l^2}^2 &=& \fl{1}{2\pi}\int_{-\pi/h}^{\pi/h} |\check{u}(\xi)|^2\md\xi\\
    &=& \fl{1}{2\pi}\bigg(\int_{-\varepsilon}^{\varepsilon} |\check{u}(\xi)|^2\md\xi + \int_{\varepsilon<|\xi|<\fl{\pi}{h}} |\check{u}(\xi)|^2\md\xi\bigg),
\eeas
where $\veps > 0$ will be discussed later.
Due to the homogeneous boundary conditions in (\ref{BC-discrete}), we get 
\begin{equation*}
\big|\check{u}(\xi)\big| = \Big| h\sum_{j \in {\Og_h}} u_j e^{-i\xi x_j} \Big| \leq h\sum_{j \in {\Og_h}} |u_j| \leq 
\sqrt{|\Og|}\|\bu_h\|_{l^2},
\end{equation*}
by the triangle and H\"{o}lder's inequalities. 
It immediately leads to the estimate
\beas\label{estimate1}
\int_{-\varepsilon}^{\varepsilon} |\check{u}(\xi)|^2\md\xi \,\leq\, \int_{-\varepsilon}^{\varepsilon} |\Og|\|\bu_h\|^2_{l^2}\md\xi = \big(2 \varepsilon |\Og|\big)\|\bu_h\|^2_{l^2}.
\eeas
Choose $\varepsilon$ such that $2\varepsilon|\Og|<\fl{1}{2}$, we have
\bea\label{estimate1-1}
\int_{-\varepsilon}^{\varepsilon} |\check{u}(\xi)|^2\md\xi \,\leq\, \fl{1}{2}\|\bu_h\|^2_{l^2}.
\eea

On the other hand, we have
\beas
&&\int_{\varepsilon<|\xi|<\fl{\pi}{h}} |\check{u}(\xi)|^2\md\xi = \int_{\varepsilon<|\xi|<\fl{\pi}{h}} \fl{1}{|\xi|^{\ap}} \Big({|\xi|^{\ap}|\check{u}(\xi)|^2}\Big)\md\xi \nn\\
&&\hspace{2.9cm} \le \, \varepsilon^{-\ap}\int_{\varepsilon<|\xi|<\fl{\pi}{h}} {|\xi|^{\ap}|\check{u}(\xi)|^2}\md\xi  \nn \\
&&\hspace{2.9cm}  \le \, \varepsilon^{-\ap} \int_{|\xi|<\fl{\pi}{h}} {|\xi|^{\ap}|\check{u}(\xi)|^2}\md\xi \, = 2\pi \varepsilon^{-\ap} {\big \langle \bu_h, (-\Dt)^{\fl{\ap}{2}}_{h} \bu_h\big\rangle_h}
\eeas
by the Parseval's identity.  
Noticing the homogeneous Dirichlet boundary conditions in \eqref{BC-discrete}, we further obtain 
\bea\label{estimate2}
&&\int_{\varepsilon<|\xi|<\fl{\pi}{h}} |\check{u}(\xi)|^2\md\xi \, \le\,  2\pi \varepsilon^{-\ap} \big\langle \bu_h, (-\Dt)^{\fl{\ap}{2}}_{h} \bu_h\big\rangle_{\Og} \nn\\
&&\hspace{3.2cm}  \le \,  2\pi \varepsilon^{-\ap} \|\bu_h\|_{l^2(\Og)}\|(-\Dt)^{\fl{\ap}{2}}_{h} \bu_h\|_{l^2(\Og)}.
\eea
Combining the estimates in \eqref{estimate1-1} and \eqref{estimate2} and noticing $\|\bu_h\|_{l^2} = \|\bu_h\|_{l^2(\Og)}$ for solution of (\ref{Poisson-discrete})--(\ref{BC-discrete}), we immediately obtain \eqref{inq-flh}, where $C$ depends on $\varepsilon$ but independent of $h$.
\end{proof}

\begin{theorem}[Stability]
\label{thm2}
Suppose $\bu_h = \big\{u_j^h\big\}_{j \in \Og_h}$ is the solution of the discrete problem (\ref{Poisson-discrete})--(\ref{BC-discrete}). 
Then it satisfies 
\begin{equation}\label{inq-stability}
    \|\bu_h\|_{l^2(\Og)}  \leq  C \|{\bf f}\|_{l^2(\Og)},
\end{equation}
where  $C$ is a positive constant independent of $h$, and ${\bf f} = \big\{f(x_j)\big\}_{j \in \Og_h}$. 
\end{theorem}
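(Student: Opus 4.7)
The proof of Theorem \ref{thm2} should be essentially a one-step application of the preceding Lemma \ref{lemma-flh-bound}, since the heavy lifting (the Fourier-side split into low and high frequencies together with the use of Parseval's identity) has already been done there. My plan is as follows.

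First, I would use the discrete equation \eqref{Poisson-discrete} directly: for every $j \in \Og_h$, $(-\Dt)_h^{\fl{\ap}{2}} u_j^h = f(x_j)$. Taking the $l^2(\Og)$-norm on both sides gives the identity
\begin{equation*}
    \|(-\Dt)_h^{\fl{\ap}{2}} \bu_h\|_{l^2(\Og)} \;=\; \|\bbf\|_{l^2(\Og)}.
\end{equation*}
This converts the statement we want into a bound on $\|\bu_h\|_{l^2(\Og)}$ in terms of $\|(-\Dt)_h^{\fl{\ap}{2}} \bu_h\|_{l^2(\Og)}$.

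Next, I would observe that by the homogeneous exterior condition \eqref{BC-discrete}, the grid function $\bu_h$ is supported in $\Og_h$, so
\begin{equation*}
    \|\bu_h\|_{l^2(\Og)} \;=\; \|\bu_h\|_{l^2}.
\end{equation*}
Applying Lemma \ref{lemma-flh-bound} then yields $\|\bu_h\|_{l^2} \le C\,\|(-\Dt)_h^{\fl{\ap}{2}} \bu_h\|_{l^2(\Og)}$, and substituting the identity from the previous step gives the desired estimate \eqref{inq-stability}, with $C$ the same constant as in Lemma \ref{lemma-flh-bound} (in particular independent of $h$).

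There is really no significant obstacle here: all the analytical content sits inside Lemma \ref{lemma-flh-bound}, whose proof already handles the two tricky ingredients, namely the low-frequency control of $\check{u}$ by $\sqrt{|\Og|}\|\bu_h\|_{l^2}$ using the compact support of $\bu_h$, and the high-frequency control via the symbol identity $\mathcal{F}_h[(-\Dt)_h^{\fl{\ap}{2}} \bu] = |\xi|^\ap \mathcal{F}_h[\bu]$ from Lemma \ref{remark2p1} combined with Parseval's identity. The only thing to be careful about in writing up the proof of Theorem \ref{thm2} is to remark explicitly that $\|\bu_h\|_{l^2} = \|\bu_h\|_{l^2(\Og)}$ on account of the extension-by-zero, so that the bound from Lemma \ref{lemma-flh-bound} can be read on $\Og_h$ as required.
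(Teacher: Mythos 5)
Your proposal is correct and follows essentially the same route as the paper: both reduce the theorem to Lemma \ref{lemma-flh-bound} after observing $\|\bu_h\|_{l^2}=\|\bu_h\|_{l^2(\Og)}$, the only (cosmetic) difference being that you read off the identity $\|(-\Dt)_h^{\fl{\ap}{2}}\bu_h\|_{l^2(\Og)}=\|\bbf\|_{l^2(\Og)}$ directly from the discrete equation, whereas the paper derives the corresponding bound by testing against $(-\Dt)_h^{\fl{\ap}{2}}\bar u_j^h$ and applying Cauchy--Schwarz.
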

\begin{proof}
Multiplying both sides of (\ref{Poisson-discrete}) with $(-\Dt)^{\fl{\alpha}{2}}_{h}\bar{u}_j^h$ and summing it for index $j \in \Og_h$, we get
\beas
\big\langle(-\Dt)^{\fl{\alpha}{2}}_{h}\bu_h,\, (-\Dt)^{\fl{\alpha}{2}}_{h}\bu_h \big\rangle_{{\Og}} = \big\langle {\bf f},\, (-\Dt)^{\fl{\alpha}{2}}_{h}\bu_h \big\rangle_{{\Og}}.
\eeas
We then use the Cauchy--Schwarz inequality and obtain 
\bea\label{thm2-eq1}
\big\|(-\Dt)^{\fl{\alpha}{2}}_{h}\bu_h \big\|_{l^{2}(\Og)} \leq  \sqrt{|\Og|}\,\|{\bf f}\|_{l^2(\Og)}  .
\eea
From (\ref{inq-flh}) and (\ref{thm2-eq1}), we then obtain
\beas
\|\bu_h\|_{l^2} = \|\bu_h\|_{l^2(\Og)} \leq C\|(-\Dt)^{\fl{\ap}{2}} \bu_h\|_{l^2(\Og)} \leq C\|{\bf f}\|_{l^2(\Og)}.
\eeas
with $C > 0$ independent of $h$.
\end{proof}
The result in (\ref{inq-stability}) implies that if $f(x) = 0$, then $\|{\bf u}\|_{l^{2}(\Og)}=0$. 
Therefore, we obtain $u(x) \equiv  0$ for $x \in \Og$, which implies that there exists a unique solution to (\ref{Poisson-discrete})--(\ref{BC-discrete}). 

\begin{theorem}[Convergence]
\label{thm3}
Suppose $u(x)$ is the exact solution of  (\ref{Poisson}), and denote $\bu = \{u(x_j)\}_{j\in {\Og_h}}$.  Let $\bu_h$ be the solution of the discrete system (\ref{Poisson-discrete})--(\ref{BC-discrete}). 
 \vspace{1mm}

\begin{enumerate}
\item[(i)]   Suppose $u \in C^{p,\gamma}(\bar{\Og})$ for $p \in {\mathbb N}^0$ and $\gm \in (0, 1]$.  
Moreover, if $u^{(k)}\in L^2(\bar{\Og})$ for $k\leq p-1$, and $u^{(p)}$ is bounded variation, then
\begin{equation}\label{ineq-sol1}
        \|\bu - \bu_h\|_{l^{2}(\Og)} \,\leq\, C h^{p+\gamma-\alpha+1/2}.
\end{equation}

\item[(ii)]  If $u\in C^{\infty}(\bar{\Og})$ and $u^{(k)}\in L^2(\bar{\Og})$, then there is
 \begin{equation}\label{ineq-sol2}
        \|\bu - \bu_h\|_{l^{2}(\Og)}\, < \, C h^m, \qquad \mbox{for\, any} \ \ m\ge 0,
  \end{equation}
that is, our method has a spectral accuracy.
\end{enumerate}
\end{theorem}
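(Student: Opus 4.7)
The plan is to run the classical ``stability $+$ consistency $\Rightarrow$ convergence'' argument. First I define the grid error function $\mathbf{e} := \bu - \bu_h$ on $\mathbb{Z}$. Because the exact solution $u$ vanishes on $\Og^c$ while $\bu_h$ is set to zero on $\Og_h^c$, we automatically have $e_j = 0$ for every $j \in \Og_h^c$. Since $u$ solves (\ref{Poisson}), $(-\Dt)^{\fl{\ap}{2}} u(x_j) = f(x_j)$ for $j \in \Og_h$, and since $\bu_h$ solves (\ref{Poisson-discrete})--(\ref{BC-discrete}), $(-\Dt)_h^{\fl{\ap}{2}} u_j^h = f(x_j)$ on the same index set. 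Subtracting these two identities gives the error equation
\[
(-\Dt)_h^{\fl{\ap}{2}} e_j = \tau_j, \quad j \in \Og_h; \qquad e_j = 0, \quad j \in \Og_h^c,
\]
where $\tau_j := (-\Dt)_h^{\fl{\ap}{2}} u_j - (-\Dt)^{\fl{\ap}{2}} u(x_j)$ is the local truncation error at $x_j$.

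Next I invoke stability. Theorem \ref{thm2} applies to the generic discrete problem (\ref{Poisson-discrete})--(\ref{BC-discrete}) for any right-hand side in $l^2(\Og)$, so inserting $\boldsymbol{\tau} = \{\tau_j\}_{j \in \Og_h}$ in place of $\bbf$ immediately yields
\[
\|\mathbf{e}\|_{l^2(\Og)} \leq C \|\boldsymbol{\tau}\|_{l^2(\Og)}.
\]
It then suffices to bound $\|\boldsymbol{\tau}\|_{l^2(\Og)}$, which is exactly the consistency quantity controlled by Theorem \ref{thm1}. Under the hypothesis of case (i), the $l^2$ variant of Theorem \ref{thm1} localized to $\Og_h$ (as noted in the paragraph following its proof) gives $\|\boldsymbol{\tau}\|_{l^2(\Og)} \leq C h^{p+\gamma-\alpha+1/2}$, from which (\ref{ineq-sol1}) follows. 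For case (ii), the spectral-accuracy version of Theorem \ref{thm1} produces $\|\boldsymbol{\tau}\|_{l^2(\Og)} < C h^m$ for every $m \geq 0$, yielding (\ref{ineq-sol2}).

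The main obstacle is reconciling the \emph{global} smoothness hypotheses of Theorem \ref{thm1} (which assume $u$ and its derivatives lie in $L^2(\mathbb{R})$ and that $u^{(p)}$ has bounded variation on the whole real line) with the purely \emph{local} hypothesis $u \in C^{p,\gamma}(\bar{\Og})$ imposed here, since the zero extension of $u$ from $\bar{\Og}$ to $\mathbb{R}$ generally destroys smoothness across $\partial \Og$. Closing this gap requires either implicitly assuming that $u$ admits a sufficiently smooth extension to all of $\mathbb{R}$, or splitting $\tau_j$ into an interior contribution controlled by the local H\"older seminorms of $u$ on $\bar{\Og}$ and a boundary contribution that remains harmless because the convolution weights $\og^{(1)}_{\ap,h}(k-j)$ decay in $|k-j|$. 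Once the consistency estimate on $\|\boldsymbol{\tau}\|_{l^2(\Og)}$ is in place, both (\ref{ineq-sol1}) and (\ref{ineq-sol2}) follow directly from the stability bound above.
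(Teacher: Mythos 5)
Your proposal is correct and follows essentially the same route as the paper: form the error equation for $\mathbf{e}=\bu-\bu_h$ with the truncation error as right-hand side, apply the stability argument of Theorem \ref{thm2} to get $\|\mathbf{e}\|_{l^2(\Og)}\le C\|\boldsymbol{\tau}\|_{l^2(\Og)}$, and then invoke the consistency bounds (\ref{LTE1-1}) and (\ref{LTE2}). The local-versus-global regularity mismatch you flag at the end is real, but the paper leaves it equally implicit (it silently applies Theorem \ref{thm1}, whose hypotheses are stated on all of $\mathbb{R}$, to the zero-extended solution), so your treatment is no less complete than the paper's own.
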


\begin{proof}
Denote the grid error function  ${\bf e} = \{e_j\}_{j \in {\mathbb Z}}$ with 
\beas
 e_j = u(x_j) - u_j^h, \qquad \mbox{for} \ \, j \in {\mathbb Z}. 
\eeas
From (\ref{Poisson}) and (\ref{Poisson-discrete})--(\ref{BC-discrete}), we obtain  
\beas
\begin{aligned}
(-\Dt)^{\fl{\ap}{2}}_{h} e_j = (-\Dt)^{\fl{\ap}{2}}u(x_j) -  (-\Dt)^{\fl{\ap}{2}}_{h} u_j^h, &\quad& \mbox{for} \ \  j \in \Og_h ,\qquad\quad\\
e_j = 0, &\quad\ & \mbox{for} \ \  j \in \Og_h^c. \qquad\quad\\
\end{aligned}
\eeas
Hence, following the similar lines as in proving Theorem \ref{thm2}, we obtain
\beas
\|{\bf e}\|_{l^2(\Og)} &\le&C  {\|(-\Dt)^{\fl{\ap}{2}}{\bf u} -  (-\Dt)^{\fl{\ap}{2}}_{h}{\bf u}_h\|_{l^2(\Og)}}\\
&\le&C  {\|(-\Dt)^{\fl{\ap}{2}}{\bf u} -  (-\Dt)^{\fl{\ap}{2}}_{h}{\bf u}_h\|_{l^2}}.
\eeas
Combining it with the  estimates in (\ref{LTE1-1}) and (\ref{LTE2}), we  obtain (\ref{ineq-sol1}) and (\ref{ineq-sol2}), respectively.
\end{proof}

 \begin{remark}
Our numerical studies show that the observed accuracy rate could be much higher than that predicted in (\ref{ineq-sol1}); see Figures \ref{fig:ex00} and \ref{fig:ex4}. 
Particularly, for the benchmark fractional Poisson problem (i.e., $s = \ap/2$ in (\ref{elliptic-ex1})), our method has the numerical error of ${\mathcal O}(h^{\min\{1, (\ap+1)/2\}})$; see  Figure \ref{fig:ex00}. 
\begin{figure}[htb!]
\centerline{
\includegraphics[width=5.6cm, height = 4.86cm]{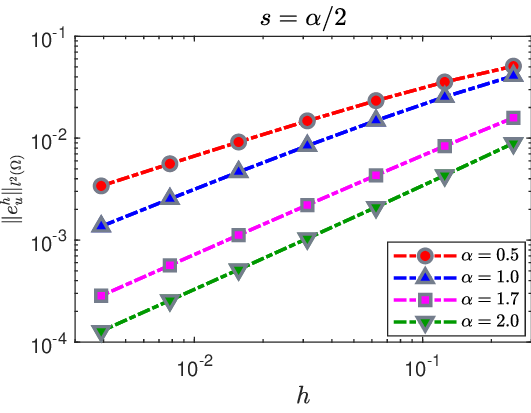}}
\caption{Numerical errors  of our method in solving the benchmark Poisson problem (\ref{elliptic-ex1}) with $s = \ap/2$, where the exact solution $u\in C^{0,\fl{\ap}{2}}(\bar{\Og})$.}
\label{fig:ex00}
\end{figure}
In this case, the exact solution $u\in C^{0,\fl{\ap}{2}}(\bar{\Og})$, and our method has the same accuracy as the finite element/difference methods in \cite{Acosta2017short, Acosta2017, Hao2021, Duo2018} since the low regularity of solution at the boundary becomes dominant.  
\end{remark}

\section{Generalization to high dimensions}
\label{section4}
\setcounter{equation}{0}

{Compared to one-dimensional cases, it is more challenging to approximate the fractional Laplacian in high dimensions. 
In this section, we generalize our spectral method for the one-dimensional fractional Laplacian to high-dimensional cases with $d \ge 2$.   
Specifically, we will present two generalization approaches in Sections \ref{section4-1} and \ref{section4-2}, with their implementation and performance comparison discussed in Section \ref{section4-3}.}

Let \ ${\bf j}  = (j_1, \, j_2,\,\cdots,j_d) \in {\mathbb Z}^d$.   
Define the $d$-dimensional grid points $\bx_{\bf j} = {\bf j} h = (j_1h,\, j_2 h,\, \cdots, \, j_d h)$, and denote  grid function $\bu^{(d)}=\{u_{\bj}\}_{\bj\in\mathz^d}$. 
In high dimensions, the inner product and associated norm can be defined similarly as in Section \ref{section2}. 
First, we generalize the definition of the semi-discrete Fourier transform to $d$ dimensions.
\begin{definition}[$d$-dimensional semi-discrete Fourier transform]
\label{Def-SFT-d}
For a grid function $\bu^{(d)}\in l^2({\mathbb R}^d)$, the semi-discrete Fourier transform ${\mathcal F}_h$ is defined as
\begin{equation}\label{semiFT}
    \check{u}(\bxi) = \big(\mathcal{F}_h [\bu^{(d)}]\big)(\bxi) = h^d\sum_{\bj\in\mathz^d} u_{\bj}\,e^{-i\bxi \cdot \bx_{\bj}}, \quad \ \mbox{for} \ \ \bxi\in \big[-\fl{\pi}{h},\ \fl{\pi}{h}\big]^d, 
\end{equation}
where $\bxi \cdot \bx_{\bf j}$ denotes the dot product of vectors $\bxi$ and $\bx_{\bf j}$. 
The inverse semi-discrete Fourier transform is defined by
\begin{equation}\label{inverse-semiFT}
    u_{\bj} = \mathcal{F}_{h,\,\bf j}^{-1} [\check{u}] = \fl{1}{(2\pi)^d} \underbrace{\int_{-\pi/h}^{\pi/h}\cdots\int_{-\pi/h}^{\pi/h}}_{d \, {\rm times}} \check{u}(\bxi) \,e^{i\bxi \cdot \bx_{\bj}} \md \bxi, \quad \ \mbox{for} \ \ \bj \in \mathz^d.
\end{equation}
\end{definition}

{In high dimensions ($d > 1$), we can generalize the scheme (\ref{1Dscheme0}) from two different aspects.}

{\subsection{Generalization I}
\label{section4-1}

Following the same approach as in obtaining (\ref{1Dscheme0}), we can approximate the $d$-dimensional Fourier transforms in definition (\ref{pseudo}) using the semi-discrete Fourier transforms and then obtain the approximation at point $\bx =\bx_{\bf k}$ as:
\beas
&&(-\Dt)^{\fl{\ap}{2}}_{h} u_{\bf k} = \Big(\fl{h}{2\pi}\Big)^d \int_{-\pi/h}^{\pi/h}\cdots\int_{-\pi/h}^{\pi/h} |\bxi|^\ap \bigg(\sum_{{\bf j} \in \mathbb{Z}^d} u_{\bf j}\,e^{i\bxi \cdot (\bx_{\bf k}-\bx_{\bf j})}\bigg)\md \bxi, \qquad \mbox{for} \ \ {\bf k} \in {\mathbb Z}^d. \quad 
\eeas
Hence,  the scheme for the  $d$-dimensional  ($d \ge 1$) fractional Laplacian is given by
\bea\label{nDscheme00}
(-\Dt)^{\fl{\ap}{2}}_{h} u_{\bf k}  =  \sum_{{\bf j} \in \mathbb{Z}^d} \upsilon_{\ap, h}^{(d)}(|{\bf k} - {\bf j}|) \, u_{\bf j}, &\quad \mbox{for} \ \, {\bf k} \in {\mathbb Z}^d
\eea
where the coefficients are defined as
\bea\label{coef00}
\upsilon_{\ap, h}^{(d)}(|{\bf k} - {\bf j}|) = \Big(\fl{h}{2\pi}\Big)^d\int_{\bxi\in [-\fl{\pi}{h}, \fl{\pi}{h}]^d} |\bxi|^\ap e^{i\bxi \cdot (\bx_{\bf k}-\bx_{\bf j})} \md \bxi, \quad \ \mbox{for} \ {\bf k}, {\bf j} \in {\mathbb Z}^d.
\eea

For $d = 1$, the scheme (\ref{nDscheme00}) reduces to (\ref{1Dscheme0}) for one-dimensional cases, and we can analytically formulate $\upsilon_{\ap, h}^{(1)}(|{\bf k} - {\bf j}|)$ in terms of generalized hypergeometric functions; see (\ref{1Dcoeff}). 
In the special case of  $\ap = 2$, the coefficients in (\ref{coef00}) can be analytically formulated as: 
\beas
\upsilon_{2, h}^{(d)}(|{\bf k} - {\bf j}|) = \fl{1}{h^{2}}\left\{\begin{array}{lll}
\displaystyle \fl{d}{3}\pi^2, &\ & \mbox{if} \ \ {\bf k} = {\bf j} \\
\displaystyle 2\fl{(-1)^{k_m-j_m}}{(k_m-j_m)^2}, & &\mbox{if} \  k_m \neq j_m,  \  \mbox{and} \ \,  k_l=j_l,  \  \mbox{for} \ \, l \neq m, \\
\displaystyle 0, & &\mbox{otherwise},
 \end{array}\right.\qquad 
\eeas
for any dimension $d \ge 1$. 
For $\ap=2$, the $d$-dimensional semi-discrete Fourier transform degenerates to 1-dimensional semi-discrete Fourier transform along each direction. 
However, for $\ap \in (0, 2)$ and $d > 1$,  it is challenging to obtain the analytical form of $\upsilon_{\ap, h}^{(d)}(|{\bf k} - {\bf j}|)$, and thus numerical integrations are required in practical implementations. 
If  the value of $|{\bf k - \bf j}|$ is large, the integrand in (\ref{coef00}) is a highly oscillatory function.  
Special numerical integration techniques are needed for accurate computation of (\ref{coef00}). 
More discussion on numerical integration of highly oscillating functions can be found in  \cite{Olver2005, Milovanovic2014} and references therein. 

The high-dimensional scheme (\ref{nDscheme00}) is straightforwardly derived following the approach used in one-dimensional cases. 
Consequently, its error analysis  can be conducted similarly to Section \ref{section3}. }

{\subsection{Generalization II}
\label{section4-2}

As previously discussed, numerical integrations are required to compute the coefficients in the high-dimensional scheme (\ref{nDscheme00}). 
To avoid numerically calculating the coefficients, we introduce a new generalization approach in this section.
First, we rewrite the one-dimensional scheme (\ref{1Dscheme0})  as:
\bea\label{1Dscheme00}
(-\Dt)^\fl{\ap}{2} u_k = \sum_{j \in \mathbb Z} \bigg(\fl{h}{2\pi}\int_{-\pi/h}^{\pi/h} |\xi|^\ap e^{i\xi(x_k-x_j)} \md\xi\bigg) u_j, \quad \  \mbox{for} \ \, {k} \in {\mathbb Z},
\eea
which can be viewed as a weighted summation of all points $u_j$,  for $j \in {\mathbb Z}$. 
Denote $B_{r}({\bf 0}) := \big\{\bx \in\mathbb{R}^d : |\bx| \leq r \big\}$ as a $d$-dimensional ball with radius $r > 0$.  
Then we generalize the one-dimensional weighted summation in (\ref{1Dscheme00}) to high dimensions as: 
\bea\label{nDscheme0}
(-\Dt)^{\fl{\ap}{2}}_{h} u_{\bf k} =  \sum_{{\bf j} \in \mathbb{Z}^d} w_{\ap, h}^{(d)}(|{\bf k}-{\bf j}|) u_{\bf j},  \quad \   \mbox{for} \ \, {\bf k} \in {\mathbb Z}^d.
\eea 
with the $d$-dependent weight function 
\bea\label{coef0}
w_{\ap, h}^{(d)}(|{\bf k}-{\bf j}|) = \Big(\fl{h}{2\pi}\Big)^d\int_{\bxi\in B_{\fl{\pi}{h}}({\bf 0})} |\bxi|^\ap e^{i\bxi \cdot (\bx_{\bf k}-\bx_{\bf j})} \md \bxi, \quad \ \mbox{for} \ \, {\bf k}, {\bf j} \in {\mathbb Z}^d.
\eea
by accounting for the rotational invariance of the fractional Laplacian. 
Note that both schemes (\ref{nDscheme00}) and (\ref{nDscheme0}) reduce to the one-dimensional scheme (\ref{1Dscheme0}) if  $d = 1$.

Next we focus on finding the analytical expression of the weight function in (\ref{coef0}). 
}
It is well-known that if $\varphi(\bx) = \varphi(|\bx|)$ is a radial function, then its Fourier transform $\widehat{\varphi}(\bxi)$ is also a radial function.  
Moreover, it can be given in terms of the Hankel transform, i.e., 
\bea\label{Hankel}
 \widehat{\varphi}(\bxi) = \fl{(2\pi)^{d/2}}{|\bxi|^{d/2-1}}\int_{0}^{\infty} \varphi(\rho)\,\rho^{d/2}  J_{d/2-1}(\rho|\bxi|)\md \rho, \quad \ \ \mbox{for} \ \, d \ge 2.
\eea
where $J_{\nu}(z)$ denotes the $\nu$-th order Bessel function of the first kind. 
As an extension, we have the following lemma \cite{Samko1993, Samko2001}: 

\begin{lemma} 
\label{lem-highD} 
If $\varphi(\bx)$ is a summable radial function in a $d$-dimensional ball $B_{r}({\bf 0})$,  then we have
\begin{equation}\label{bessel1}
    \int_{\by\in B_{r}({\bf 0})} \varphi(|\by|) e^{i {\bx\cdot\by}}\md\by = \fl{(2\pi)^{d/2}}{|\bx|^{d/2-1}}\int_{0}^{r} \varphi(\rho)\,\rho^{d/2}  J_{d/2-1}(\rho|\bx|)\md \rho,\qquad
\end{equation}
for any $r > 0$.
\end{lemma}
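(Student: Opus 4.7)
The plan is to reduce the $d$-dimensional ball integral to an iterated radial-angular integral via spherical coordinates, then recognize the angular piece as a classical Bochner-type integral over the unit sphere whose value is a Bessel function.

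First I would write $\by = \rho\, \omega$ with $\rho = |\by| \in [0,r]$ and $\omega \in S^{d-1}$, using the volume element $\md \by = \rho^{d-1}\,\md\rho\,\md\sigma(\omega)$ where $\md\sigma$ is the surface measure. Since $\varphi$ depends only on $\rho$, Fubini gives
\be\label{eq-plan-1}
\int_{B_r({\bf 0})}\varphi(|\by|)\,e^{i\bx\cdot\by}\md\by \,=\, \int_0^r \varphi(\rho)\,\rho^{d-1}\bigg(\int_{S^{d-1}} e^{i\rho\bx\cdot\omega}\md\sigma(\omega)\bigg)\md\rho.
\ee
The summability of $\varphi$ on $B_r$ justifies the interchange.

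Next I would establish the key angular identity
\be\label{eq-plan-2}
\int_{S^{d-1}} e^{i\bz\cdot\omega}\md\sigma(\omega) \,=\, \fl{(2\pi)^{d/2}}{|\bz|^{d/2-1}}\,J_{d/2-1}(|\bz|), \qquad \bz\in\mathbb{R}^d\setminus\{\bzero\},
\ee
by rotational invariance (without loss of generality take $\bz = |\bz|\,e_d$) and reducing to a one-dimensional integral over the ``latitude'' coordinate $t = \omega\cdot e_d$. The area element on $S^{d-1}$ in this slicing is proportional to $(1-t^2)^{(d-3)/2}\md t$, so the angular integral becomes, up to the constant area of $S^{d-2}$, an integral of the form $\int_{-1}^1 (1-t^2)^{(d-3)/2} e^{i|\bz|t}\md t$. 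This is precisely the classical Poisson integral representation of the Bessel function $J_{d/2-1}$, which, after tracking the prefactors $|S^{d-2}| = 2\pi^{(d-1)/2}/\Gamma((d-1)/2)$ and $\Gamma(d/2-1/2)$, yields \eqref{eq-plan-2}. Applying \eqref{eq-plan-2} with $\bz = \rho\bx$ and substituting into \eqref{eq-plan-1}, I would collect the factors $\rho^{d-1}\cdot(\rho|\bx|)^{-(d/2-1)} = \rho^{d/2}|\bx|^{1-d/2}$ to arrive at the claimed formula (\ref{bessel1}).

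The main obstacle is the derivation of the spherical integral \eqref{eq-plan-2}. Since this is a standard result that can be taken directly from \cite{Samko1993, Samko2001} (and is essentially the Funk--Hecke-type computation that underlies the Hankel form (\ref{Hankel})), I expect the proof to be largely bookkeeping: the only care needed is tracking the Gamma-function identities $\Gamma((d-1)/2)\,\Gamma(d/2-1/2) \cdot 2^{d/2-1}\pi^{1/2} = (2\pi)^{d/2}/|S^{d-2}|^{-1}\cdot(\text{cancellations})$ so that all constants coalesce cleanly into $(2\pi)^{d/2}$. Note that the identity holds for any $r>0$ (including $r = \infty$ when $\varphi$ is summable on $\mathbb{R}^d$), so (\ref{bessel1}) is simply the local version of the Hankel transform relation (\ref{Hankel}) with the outer $\rho$-integration truncated at $r$, which is exactly what will be needed in Section \ref{section4} when integrating the symbol $|\bxi|^\ap$ over the cube $[-\pi/h,\pi/h]^d$.
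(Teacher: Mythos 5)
Your proof is correct. Note that the paper does not actually prove this lemma; it simply cites \cite[\S 25.1]{Samko1993}, and the argument given there is essentially the one you outline: pass to spherical coordinates, use rotational invariance to reduce the angular integral to $|S^{d-2}|\int_{-1}^{1}(1-t^2)^{(d-3)/2}e^{i|\bz|t}\,\md t$, and identify this with $J_{d/2-1}$ via the Poisson integral representation. Your constant-tracking checks out: with $|S^{d-2}| = 2\pi^{(d-1)/2}/\Gamma\big(\fl{d-1}{2}\big)$ and $J_{\nu}(z) = \fl{(z/2)^{\nu}}{\Gamma(\nu+\fl{1}{2})\Gamma(\fl{1}{2})}\int_{-1}^{1}(1-t^2)^{\nu-1/2}e^{izt}\md t$ at $\nu = d/2-1$, the Gamma factors cancel and the prefactor collapses to $(2\pi)^{d/2}|\bz|^{1-d/2}$ exactly as claimed (the garbled expression in your penultimate paragraph is harmless, since the Poisson formula already delivers the constant). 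The only cosmetic caveat is that the identity \eqref{bessel1} as you derive it requires $d\ge 2$ (so that the latitude slicing makes sense), which is precisely the regime in which the paper invokes the lemma.
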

It is easy to see that if $r \to \infty$, the result in (\ref{bessel1}) collapses to the Fourier transform of  $\varphi(\bx)$ in (\ref{Hankel}).  
The proof of Lemma \ref{lem-highD} can be found in \cite[\S 25.1]{Samko1993}. 
Particularly, if $\varphi(\bx)$ is a radial power function, i.e. $\varphi(\bx) = |\bx|^p$ for $p > 0$, the integral at the right hand side of (\ref{bessel1}) can be calculated exactly, 
i.e., 
\begin{equation}\label{bessel2}
\int_{0}^{r} \rho^\lambda J_{\nu}(\rho) \md\rho = \fl{r^{\lambda+\nu+1}}{2^\nu (\lambda+\nu+1)\Gamma{(\nu+1)}} \,{ }_{1}F_{2}\Big( \fl{\lambda+\nu+1}{2};\nu+1,\fl{\lambda+\nu+3}{2};-\fl{r^2}{4} \Big), 
\end{equation}
for  $r > 0$ and ${\rm Re}{(\lambda+\nu)}>-1$. 
If $r \to \infty$, there is 
\beas
\int_{0}^{\infty} \rho^\lambda J_{\nu}(\rho) \md\rho = 2^{\lambda}\fl{\Gamma\big(({\nu+\lambda+1})/{2}\big)}{\Gamma\big(({\nu-\lambda+1})/{2}\big)}.
\eeas 
Using Lemma \ref{lem-highD} with $r = \pi/h$ and $\varphi(|{\bf y}|) = |{\bf y}|^\ap$, and noticing $|\bx_{\bf k}-\bx_{\bf j}|  = |{\bf k} - {\bf j}| h$,  we obtain 
\beas
&&w_{\ap, h}^{(d)}(|{\bf k}-{\bf j}|) = \Big(\fl{h}{2\pi}\Big)^d \fl{(2\pi)^{d/2}}{|\bx_{\bf k}-\bx_{\bf j}|^{(d-2)/2}}\int_{0}^{\fl{\pi}{h}}\rho^{\ap+d/2} J_{d/2-1}\big(\rho|\bx_{\bf k}-\bx_{\bf j}|\big)\md \rho\qquad\qquad\quad  \nn\\
&&\hspace{2.2cm} = \fl{(2\pi)^{-d/2}}{|{\bf k}-{\bf j}|^{\ap + d} h^{\ap}}\int_{0}^{|{\bf k}-{\bf j}|\pi}  \theta^{\ap+d/2} J_{{d}/{2}-1}(\theta) \md\theta,\qquad
\eeas
where  $\theta = | {\bf k} - {\bf j}|\rho h$. 
We can further use the property of the Bessel functions in (\ref{bessel2}) to obtain 
\bea\label{nDcoeff}
w_{\ap, h}^{(d)}(|{\bf k}-{\bf j}|) 
= \fl{2 \pi^{\ap+3d/2}}{(\ap + d)\Gamma(d/2)h^{\ap+d}}\,_{1}F_{2}\Big( \fl{\ap+d}{2}; \, \fl{\ap+d+2}{2}, \fl{d}{2};\, -\fl{\pi^2|{\bf k} - {\bf j}|^2}{4} \Big),
\eea
 i.e., taking $r = \pi|{\bf k}-{\bf j}|$, $\lambda = \ap + d/2$, and $\nu = d/2-1$ in (\ref{bessel2}). 
 
Substituting (\ref{nDcoeff}) into (\ref{nDscheme0}) immediately yields the spectral approximation to the $d$-dimensional ($d \ge 1$) fractional Laplacian as: 
\bea\label{nDscheme}
&&(-\Dt)^{\fl{\ap}{2}}_{h} u_{\bf k} =  \fl{2^{1-d}\pi^{\ap+d/2}}{(\ap+d)\Gamma{(d/2\big)}h^\ap}\sum_{{\bf j} \in {\mathbb Z}^d} \,_{1}F_{2}\Big( \fl{\ap+d}{2}; \, \fl{\ap+d+2}{2}, \fl{d}{2};\, -\fl{\pi^2|{\bf k} - {\bf j}|^2}{4} \Big) u_{\bf j}, \qquad 
\eea
for ${\bf k} \in {\mathbb Z}^d$ and $\ap > 0$. 
It shows that the structure of our method in (\ref{nDscheme}) is similar to finite difference methods. 
But our method can achieve much higher accuracy. 
Moreover, the coefficients of  finite difference methods are usually given by $d$-dimensional integrals \cite{Duo2019-FDM, Hao2021, Minden2020}, which  require numerical integration to calculate -- the larger the dimension $d$, the higher the computational cost in computing their coefficients. 
In contrast,  dimension $d$ in our method serves as parameters of function $_1F_2$, and thus  the computational cost in calculating $\og^{(d)}_{\ap,\,h}$ is independent of $d$. 

{\subsection{Comparison and discussion}
\label{section4-3}

The main difference between schemes (\ref{nDscheme00}) and (\ref{nDscheme0}) lies in their coefficient functions. 
In (\ref{nDscheme0}), the function  $\og^{(d)}_{\ap,\,h}(|{\bf k}-{\bf j}|)$ can be analytically expressed in terms of generalized hypergeometric functions,  which can be accurately computed with the algorithms in \cite{Pearson2017, Johansson2019}.
In contrast, computing coefficients $\upsilon^{(d)}_{\ap,\,h}(|{\bf k}-{\bf j}|)$ in (\ref{nDscheme00}) requires numerical integration.  
Special numerical integration techniques are particularly demanded if  $|{\bf k}-{\bf j}|$ is large, as the integrand becomes highly oscillating in this case. 
Clearly, the accuracy of scheme (\ref{nDscheme00}) depends on the accuracy of coefficient approximations, and the implementation of (\ref{nDscheme0}) leads to higher computational costs compared to (\ref{nDscheme00}).  

In the following, we conduct numerical experiments to compare the performance of (\ref{nDscheme00}) and (\ref{nDscheme0}) in approximating the two- and three-dimensional fractional Laplacians. 
Note that if $d = 1$, both reduce to the same scheme as in (\ref{1Dscheme0}).

\bb
\noindent{\bf Example 4.1} (Two-dimensional cases).  
Table \ref{Tab4-1} presents the numerical errors in approximating function $(-\Dt)^\fl{\ap}{2}u(x, y)$ on $(-1.5, 1.5)^2$, where  $u(x, y) = e^{-a^2(x^2+y^2)}$ with $a = 6$. 
In this case, the exact solution is given by
\bea\label{ex4-1-1}
(-\Dt)^\fl{\ap}{2} u(x, y) =  (2a)^{\ap} \Gamma\big(\fl{\ap+2}{2}\big) {}_{1}F_{1}\big( \fl{\ap+2}{2};\, 1;\, -a^2(x^2+y^2) \big), \quad\ \mbox{for} \ \, (x, y)\in {\mathbb R}^2.
\eea
The scheme (\ref{nDscheme0}) shows spectral accuracy for any $\ap \in (0, 2]$, while spectral accuracy of (\ref{nDscheme00}) is only observed when $\ap = 2$.  
For large mesh size (e.g. $h = 1/8, 1/16$), both schemes show similar numerical errors. 
However, the errors of scheme (\ref{nDscheme00}) stop decreasing when mesh size reduces to $h = 1/32$ if $\ap \in (0, 2)$, since  they are dominated by the errors in computing coefficients $\upsilon_{d, h}^{(d)}$. 
\begin{table}[htb!]
    \centering
    \begin{tabular}{|c|c|c|c||c|c|c|} \hline
       $\ap$  &$h=1/8$ &$h=1/16$ &$h=1/32$ &$h=1/8$ &$h=1/16$ &$h=1/32$  \\ \hline
       & \multicolumn{3}{|c||}{Generalization I in (\ref{nDscheme00}) with (\ref{coef00}) } & \multicolumn{3}{|c|}{Generalization II in (\ref{nDscheme0}) with (\ref{nDcoeff})} \\ \hline
        0.5     &2.765e-3 &4.278e-7 &6.427e-7 &4.575e-2 &1.416e-7 &2.700e-15\\ \hline
        1.0    &2.823e-2 &2.212e-6 &6.716e-6 &0.251     &1.024e-6 &2.310e-14\\ \hline
        1.7    &0.474     &2.199e-5 &8.014e-5 &2.703     &1.633e-5 &3.837e-13 \\ \hline
        2.0    &1.491     &1.721e-6 &1.089e-12 &7.477     &5.352e-5 &1.677e-12 \\ \hline
    \end{tabular}
    \caption{Numerical errors $\|e^h_{\Dt}\|_{l^{\infty}}(\Og)$ of the two-dimensional schemes  (\ref{nDscheme00}) and (\ref{nDscheme0}) in approximating function $(-\Dt)^{\fl{\ap}{2}}u(x, y)$ on $\Og=(-1.5,1.5)^2$, where $u(x, y)$ is defined in (\ref{ex4-1-1}) with $a = 6$.}
    \label{Tab4-1}
    \end{table}
Here, the coefficients in (\ref{coef00}) for $\ap \in (0, 2)$ are numerically  computed using the MATLAB built-in function {\it integral} with a small absolute error tolerance. 

Table \ref{Tab4-2}  presents the numerical errors in approximating $(-\Dt)^\fl{\ap}{2}u(x, y)$ on a unit disk $B_1({\bf 0})$, where $u(x, y) = \big[1-(x^2+y^2)\big]^4_+$  is a compactly support function. 
   \begin{table}[htb!]
    \centering
    \begin{tabular}{|c|c|c|c|c|} \hline
       $\ap$   &$h=1/16$ &$h=1/32$ &$h=1/64$ &$h=1/128$  \\ \hline
        \multicolumn{5}{|c|}{Generalization I in (\ref{nDscheme00}) with (\ref{coef00})} \\ \hline
        0.5    &5.584e-6 &7.956e-7 &1.468e-6 &2.516e-6\\ \hline
        1.0    &8.608e-5 &1.300e-5 &2.250e-5 &4.915e-5\\ \hline
        1.7    &2.652e-3 &6.035e-4 &4.373e-4 &1.537e-3\\ \hline
        2.0    &1.095e-2 &2.979e-3 &7.708e-4 &1.974e-4\\ \hline
       \multicolumn{5}{|c|}{Generalization II in (\ref{nDscheme0}) with (\ref{nDcoeff})} \\ \hline
        0.5    &3.430e-5  &3.007e-6 &2.643e-7  &2.365e-8\\ \hline
        1.0    &2.692e-4  &3.353e-5 &4.214e-6  &5.289e-7\\ \hline
        1.7    &4.859e-3  &9.852e-4 &2.135e-4  &4.206e-5\\ \hline
        2.0    &1.799e-2  &4.211e-3 &1.152e-3  &2.762e-4\\ \hline
    \end{tabular}
    \caption{Numerical errors $\|e^h_{\Dt}\|_{l^{\infty}(\Og)}$ of the two-dimensional schemes  (\ref{nDscheme00}) and (\ref{nDscheme0}) in approximating function $(-\Dt)^{\fl{\ap}{2}}u(x, y)$ on $B_1({\bf 0})$, where $u(x, y)$ is defined in (\ref{ex4-1-2}).}
    \label{Tab4-2}
    \end{table}
The exact solution is given by 
\bea\label{ex4-1-2}
(-\Dt)^\fl{\ap}{2}u(x, y) = 24\fl{2^\ap \Gamma\big(\fl{\ap+2}{2}\big)}{\Gamma\big(5-\fl{\ap}{2}\big)} {}_{2}F_{1}\big(\fl{\ap+2}{2},-4+\fl{\ap}{2};\,1;\, (x^2+y^2)\big), \quad \mbox{for} \ \ (x, y) \in B_1({\bf 0}). 
\eea
Compared to (\ref{ex4-1-1}), this function has less smoothness at the boundary of the disk. 
In this case, scheme (\ref{nDscheme0}) demonstrates an accuracy of ${\mathcal O}(h^{4-\alpha})$ for any $\alpha \in (0, 2]$. 
The same order of accuracy is observed for scheme (\ref{nDscheme00}) only when $\ap = 2$. 
The accuracy of scheme (\ref{nDscheme00}) is deteriorated by the errors of numerical integration if small mesh size is used (e.g., $h = 1/64, 1/128$). 
The performance of the scheme  (\ref{nDscheme00}) can be improved by using more accurate numerical integration (e.g. in \cite{Olver2005, Milovanovic2014}) for highly oscillating functions, which will be explored in our  future study.

\bigskip
\noindent{\bf Example 4.2} (Three-dimensional cases).  
Table \ref{Tab4-3} further compares these two schemes in approximating the three-dimensional function $(-\Dt)^\fl{\ap}{2}u(x, y, z)$ on $(-1, 1)^3$, where  $u(x, y, z) = xye^{-a^2|\bx|^2}$ with $a = 7$. 
In this case, the exact solution is given by 
\bea\label{ex4-2}
(-\Dt)^\fl{\ap}{2} u(x, y, z) = \fl{2^{3+\ap} a^{\ap}\Gamma\big(\fl{\ap+7}{2}\big)}{15\sqrt{\pi}} xy \,{}_{1}F_{1}\big(\fl{\ap+7}{2};\, \fl{7}{2};\,-a^2(x^2+y^2+z^2)\big), \,\, \
\eea
for $(x, y, z) \in {\mathbb R}^3$. 
The observations are consistent with  those from Table \ref{Tab4-1} for two-dimensional cases. 
The scheme (\ref{nDscheme0}) shows spectral accuracy for any $\alpha \in (0, 2]$, while the accuracy of scheme (\ref{nDscheme00}) is significantly influenced by the accuracy of numerical integration when $\alpha \in (0, 2)$. 
\begin{table}[ht!]
    \centering
    \begin{tabular}{|c|c|c|c||c|c|c|c|} \hline
       $\ap$  & $h=1/4$ &$h=1/8$ &$h=1/16$  & $h=1/4$ &$h=1/8$ &$h=1/16$ &$h=1/32$\\ \hline
      & \multicolumn{3}{|c||}{Generalization I in (\ref{nDscheme00}) with (\ref{coef00})} &\multicolumn{4}{|c|}{Generalization II in (\ref{nDscheme0}) with (\ref{nDcoeff})}\\ \hline
        0.5   &4.784e-4   &1.062e-3 &2.597e-3 &2.900e-4  &2.239e-3 &2.138e-6 &1.100e-17  \\ \hline
        1.0   &3.501e-3   &9.083e-3 &3.096e-2 &2.768e-3  &1.120e-2 &1.542e-5 &1.390e-16 \\ \hline
        1.7   &3.596e-2   &0.123    &0.666    &3.102e-2  &0.106    &2.454e-4 &4.219e-15 \\ \hline
        2.0   &8.933e-2   &6.838e-2 &4.179e-6 &7.920e-2  &0.276    &8.034e-4 &1.177e-14 \\ \hline
    \end{tabular}
    \caption{Numerical errors $\|e^h_{\Dt}\|_{l^{\infty}(\Og)}$ of the three-dimensional schemes  (\ref{nDscheme00}) and (\ref{nDscheme0}) in approximating function $(-\Dt)^{\fl{\ap}{2}}u(x, y, z)$ on $\Og = (-1, 1)^3$, where $u(x, y, z)$ is defined in (\ref{ex4-2}). Note that errors in scheme (\ref{nDscheme00}) stop decreasing for $h = 1/16$ due to integration errors when $\alpha < 2$. However, for $\alpha = 2$, scheme (\ref{nDscheme00}) can achieve an error of 1.266e-14 for $h=1/32$. }
    \label{Tab4-3}
    \end{table}
Moreover, compared to two-dimensional cases, the numerical integration required for evaluating (\ref{coef00}) in three-dimensional cases takes significantly longer time.
This again highlights the advantages of scheme (\ref{nDscheme0}), as the coefficients can be calculated more efficiently and accurately by expressing them in terms of the generalized hypergeometric function. 
}

If a bounded domain with extended Dirichlet boundary conditions is considered, {both scheme in (\ref{nDscheme00}) and (\ref{nDscheme0})} can be written into matrix-vector form with a (multi-level) Toeplitz matrix.  
Then the approximation of the  fractional Laplacian can be efficiently evaluated via the $d$-dimensional fast Fourier transforms with computational cost ${\mathcal O}(2N\log (2N))$ and memory cost ${\mathcal O}(N)$, where $N$ denotes the total number of grid points in domain.

\section{Numerical results}
\label{section5}
\setcounter{equation}{0}

In this section, we examine the performance of our method and verify our theoretical results. 
To this end,  we first test the numerical accuracy of our method in approximating the classical and fractional Laplacian $(-\Dt)^{\fl{\alpha}{2}}$ (i.e. for $0 < \ap \le 2$), and then apply it to study fractional elliptic problems. 
As discussed previously, our method has the similar framework as finite difference methods, but it can achieve much higher accuracy.  
To demonstrate this, we will compare our method with some representative finite difference methods in the literature \cite{Huang2014, Duo2018, Hao2021, Minden2020}. 
{Unless otherwise stated, we will always use the scheme (\ref{nDscheme0}) with (\ref{nDcoeff}) in the following simulations.}

\subsection{Discretization of the fractional Laplacian}
\label{section5-1}

In the following, we test the accuracy of our method in approximating $(-\Dt)^\fl{\ap}{2}u$ under different  smoothness conditions of $u$. 
The error function is defined as
\beas
\big(e_\Dt^h\big)_{\bf j}  = (-\Dt)^{\fl{\ap}{2}}u(\bx_{\bf j}) - (-\Dt)^{\fl{\ap}{2}}_{h,d} u(\bx_{\bf j}), \quad \ \ \mbox{for} \ \, \bx_{\bf j} \in \Og,
\eeas
where $(-\Dt)_{h, d}^\fl{\ap}{2}$ is the numerical approximation of the $d$-dimensional fractional Laplacian in (\ref{nDscheme}). 

\begin{example}\label{ex-op1}
Consider an inverse multiquadratic function of the form:
\begin{equation}\label{ex1}
u(x) = \fl{1}{(1+x^2)^7}, \quad \  \mbox{for} \ \, x\in\mathbb{R}.
\end{equation}
The exact solution of $(-\Dt)^{\fl{\alpha}{2}}u(x)$ is given by \cite{Wu2022},
\begin{equation}\label{exact1}
    (-\Dt)^{\fl{\alpha}{2}}u(x) = \fl{2^\ap \Gamma{\big(\fl{1+\ap}{2}\big)} \Gamma{\big(7+\fl{\ap}{2}\big)}}{720\sqrt{\pi}} \, {}_{2} F_{1}\bigg(\fl{\ap+1}{2}, 7+\fl{\ap}{2}; \fl{1}{2}; -x^2\bigg), \quad \ \mbox{for} \ \, x\in\mathbb{R}, 
\end{equation}
for any $\ap > 0$.  We will numerically approximate the function $(-\Dt)^{\fl{\alpha}{2}}u(x)$ on $\Og =  (-1, 1)$. 
\end{example}

Table \ref{tab:ex1-1} presents numerical errors $\|e_\Dt^h\|_{l^\infty(\Og)}$ of our method  for different power $\ap$ and mesh size $h$.  
Here, the function $u \in C^\infty({\mathbb R})$. 
Table \ref{tab:ex1-1} shows that our method has a spectral accuracy for any $\ap > 0$, confirming our analytical results in Theorem \ref{thm1} (ii). 
Moreover, for the same mesh size $h$, the larger the power $\ap$, the bigger the numerical errors. 
\begin{table}[htb!]
    \centering
    \begin{tabular}{ccccc}\hline
    &$h = 1/2$ & $h=1/4$ & $h=1/8$ & $h=1/16$ \\
        \hline
       $\ap =  0.5$    & 0.068 & 9.007e-4 &4.641e-8  &1.723e-16 \\
       $\ap = 1.0$     & 0.325 & 6.319e-3 &4.645e-7  &2.112e-16  \\
       $\ap = 1.7$     & 2.003 & 6.301e-2 &7.540e-6  &5.204e-15  \\
        $\ap = 2.0$    & 4.165 & 1.588e-1 &2.335e-5  &1.979e-14   \\
    \hline
    \end{tabular}
\caption{Numerical errors $\|e_{\Dt}^h\|_{l^\infty(\Og)}$ of our method in approximating $(-\Dt)^{\fl{\ap}{2}}u$ on $\Og = (-1, 1)$, where $u$ is defined in (\ref{ex1}).}
\label{tab:ex1-1}
\end{table}

Next, we compare our method with some representative finite difference methods in the literature \cite{Huang2014, Duo2018, Hao2021, Minden2020}.  
Table \ref{tab:ex1-2} shows their numerical errors with much smaller mesh size, e.g., $h = 1/256, 1/512$. 
From Tables \ref{tab:ex1-1} and \ref{tab:ex1-2}, we find that our method can achieve the same accuracy with much less number of points $N$. 
For instance, to achieve errors of ${\mathcal O}(10^{-6})\sim{\mathcal O}(10^{-8})$, our method needs around $N = 16$ (i.e., $h = 1/8$), but the methods in Table \ref{tab:ex1-2} requires $N = 1024$ (i.e., $h = 1/512$) or more. 
Consequently,  the differentiation matrices of these methods are much larger than that of our method. 
\begin{table}[ht!]
\centering
\begin{tabular}{cccccc}\hline
methods & & Huang \cite{Huang2014} & Duo \cite{Duo2018} & Hao \cite{Hao2021} & Minden  \cite{Minden2020}\\
\hline
\multirow{ 2}{*}{$\ap = 0.5$}  & $h = 1/256$  & 1.814e-4 &5.521e-6 &1.085e-5 & 4.047e-6 \\
                               & $h = 1/512$  & 6.864e-5 &1.383e-6 &2.712e-6 & 5.710e-6\\
\hline 
\multirow{ 2}{*}{$\ap = 1.0$}  & $h = 1/256$  & 2.711e-3 &7.336e-7 &5.594e-5 &5.314e-7 \\
                               & $h = 1/512$  & 1.381e-3 &1.170e-7 &1.398e-5 &6.673e-8 \\
\hline 
\multirow{ 2}{*}{$\ap = 1.7$}  & $h = 1/256$  & 5.423e-2 &1.794e-4 &3.878e-4 &9.800e-5 \\
                               & $h = 1/512$  & 4.430e-2 &4.154e-5 &9.695e-5 &1.940e-5 \\
\hline 
\end{tabular}
\caption{Numerical errors $\|e_\Dt^h\|_{l^\infty(\Og)}$ of finite difference methods in \cite{Huang2014, Duo2018, Hao2021, Minden2020} for approximating  $(-\Dt)^{\fl{\ap}{2}}u$ on $\Og = (-1, 1)$, where $u$ is defined in (\ref{ex1}).}
\label{tab:ex1-2}
\end{table} 
Note that the computational cost in computing matrix-vector product is ${\mathcal O}(2N\log(2N))$. 
Hence,  our method has much lower computational and storage cost. 
It is known that the methods in \cite{Duo2018, Hao2021} have accuracy of ${\mathcal O}(h^{2})$, while the method in \cite{Huang2014} has ${\mathcal O}(h^{2-\ap})$, which is confirmed by our observations in Table \ref{tab:ex1-2}. 

\bb
\begin{example}\label{ex-op2}
We consider a compact support function
\begin{equation}\label{ex2}
    u(x) = (a^2 - x^2)^s_{+}, \qquad \text{ for} \ \, x \in {\mathbb R}.
\end{equation}
It is easy to see that $u\in C^{s-1,1}(\mathr)$ for $s\in\mathbb{N}$.  
For $s \notin {\mathbb N}$,  it satisfies $u \in C^{\lfloor s\rfloor,\,s-\lfloor s\rfloor}({\mathbb R})$ with $\lfloor \cdot \rfloor$ denoting the floor function. 
In this case, the exact solution of $(-\Dt)^{\fl{\alpha}{2}}u$ is given by \cite{Dyda2012, Wu2022}:
\begin{equation}
    (-\Dt)^{\fl{\alpha}{2}} u(x) = \fl{2^\ap \Gamma{\big(\fl{\ap+1}{2}\big)}\Gamma{(s+1)}a^{2s-\ap}}{\sqrt{\pi}\,\Gamma{(s+1-\fl{\ap}{2})}} \,{ }_2 F_{1}\bigg(\fl{\ap+1}{2}, -s+\fl{\ap}{2};\,  \fl{1}{2}; \, \fl{x^2}{a^2}\bigg), 
\end{equation}
for any $\ap \ge 0$ and $|x|<a$. 
\end{example}

Take domain $\Og = (-1, 1)$. 
We study numerical errors in approximating function $(-\Dt)^{\fl{\ap}{2}}u(x)$ on $\Og$.  
 If choosing $a = 1$, the function in (\ref{ex2}) also satisfies $u \in C^{\lfloor s\rfloor,\,s-\lfloor s\rfloor}(\bar{\Og})$, and it has less smoothness at points $x = \pm 1$. 
\begin{table}[ht]
    \centering
    \begin{tabular}{cllllll}\hline
 &$h=1/8$ &$h=1/16$ &$h=1/32$ &$h=1/64$ &$h=1/128$ &$h=1/256$ \\ \hline
     \multirow{2}{*}{$\ap = 0.5$} &5.611e-5 &4.963e-6 &4.351e-7 &3.823e-8 &3.367e-9 &2.970e-10 \\
            &c.r. &3.499 &3.512 &3.509 &3.505 &3.503 \\
            \cline{2-7}
      \multirow{2}{*}{$\ap = 1.0$} &5.448e-4 &6.751e-5  &8.330e-6 &1.032e-6 &1.284e-7 &1.601e-8 \\
            &c.r. &3.013&3.019 &3.012 &3.007 &3.004 \\ 
                        \cline{2-7}
      \multirow{2}{*}{$\ap = 1.7$} &8.537e-3 &1.713e-3 &3.428e-4 &6.895e-5 &1.393e-5 &2.820e-6 \\
            &c.r. &2.3171 &2.321 &2.314 &2.308 &2.304 \\
                        \cline{2-7}
     \multirow{2}{*}{$\ap = 2.0$} &2.610e-2 &6.454e-3 &1.590e-3 &3.939e-4 &9.796e-5 &2.442e-5 \\ 
            &c.r. &2.016 &2.021 &2.014 &2.008 &2.004 \\ \hline
    \end{tabular}
    \caption{Numerical errors $\|e_\Dt^h\|_{l^\infty(\Og)}$ and convergence rate (c.r.) of our method in approximating  $(-\Dt)^{\fl{\ap}{2}}u(x)$ on $\Og = (-1, 1)$, where $u$ is defined in \eqref{ex2} with $a=1$ and $s=4$.}
    \label{tab1:ex2}
\end{table}
Tables \ref{tab1:ex2} presents numerical errors and convergence rates of our method, where $u$ is from (\ref{ex2}) with $a=1$ and $s=4$. 
It is easy to verify that $u \in C^{3, 1}({\mathbb R})$, and $(-\Dt)^\fl{\ap}{2}u \in C^{3-\lfloor\ap\rfloor,\,1-\ap+\lfloor\ap\rfloor}({\mathbb R})$. 
Hence, the larger the power $\ap$, the less smooth the function $(-\Dt)^\fl{\ap}{2} u$, and the larger the approximation errors. 
We find that our method has an accuracy of $\mathcal{O}(h^{4-\ap})$ for $\|e_\Dt^h\|_{l^\infty(\Og)}$, confirming our analytical results in Theorem \ref{thm1} (i). 
Moreover, our extensive studies show that:  (i) the maximum errors occur around the boundary of $\Og$, i.e., $x = \pm 1$; (ii) numerical error $\|e_\Dt^h\|_{l^2(\Og)}$ is much smaller than $\|e_\Dt^h\|_{l^\infty(\Og)}$, and it has an accuracy of  $\mathcal{O}(h^{4.5-\ap})$. 

Figure \ref{fig:ex2} further compares our method with those representative finite difference methods in \cite{Huang2014, Duo2018, Minden2020, Hao2021}, where $u$ is chosen from (\ref{ex2}) with $a = 1$, and $s = 2+\ap$,  or $s = 8$. 
Generally, the larger the value of $s$, the smoother the function $(-\Dt)^\fl{\ap}{2}u$, and the smaller the numerical errors. 
Hence, numerical errors for $s = 8$ are smaller than those of $s = 2 +\ap$. 
For $s = 2+\ap$, the function $u \in C^{2+\lfloor\ap\rfloor,\,\ap-\lfloor\ap\rfloor}({\mathbb R})$. 
Our method and those in \cite{Duo2018, Hao2021, Minden2020} all have the second order of accuracy, but the errors of our method are much smaller; see Figure \ref{fig:ex2} a) $\&$ b). 
While the method in \cite{Huang2014} has the accuracy of ${\mathcal O}(h^{2-\ap})$, which is not improved even for a smoother function (e.g., $s = 8$). 
\begin{figure}[ht]
\centerline{
a)\includegraphics[width=5.6cm, height = 4.86cm]{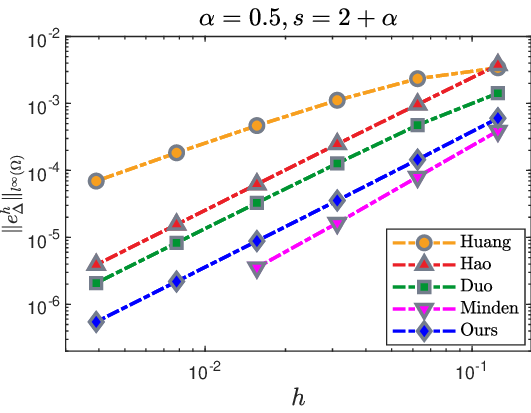}\hspace{-1mm}
b)\includegraphics[width=5.6cm, height = 4.86cm]{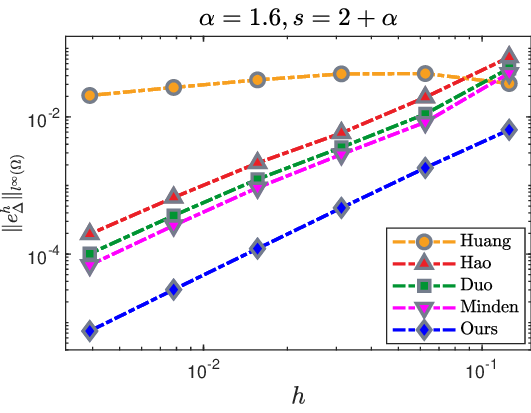}}
\centerline{
c)\includegraphics[width=5.6cm, height = 4.86cm]{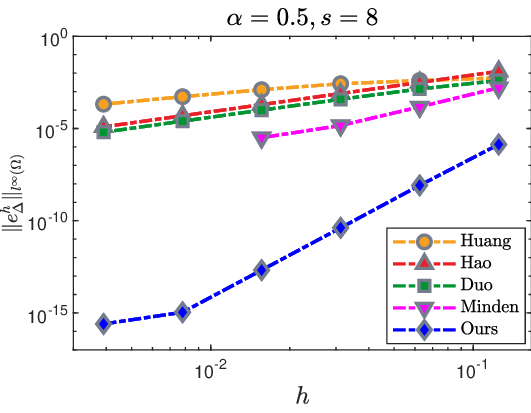}\hspace{-1mm}
d)\includegraphics[width=5.6cm, height = 4.86cm]{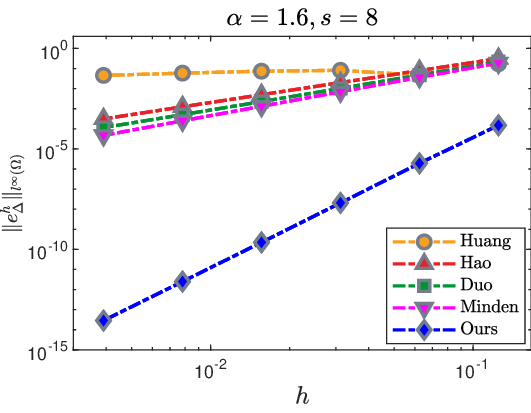}}
\caption{Comparison of our method and others (including {\it Huang} in \cite{Huang2014},  {\it Duo} in \cite{Duo2018},  {\it Minden} in \cite{Minden2020}, and {\it Hao} in \cite{Hao2021})  in approximating $(-\Dt)^{\fl{\ap}{2}}u$ on $\Og = (-1, 1)$, where $u$ is defined in (\ref{ex2}) with $a = 1$. }
\label{fig:ex2}
\end{figure}
Note that the accuracy of finite difference methods in \cite{Duo2018, Hao2021} and \cite{Minden2020} are capped by ${\mathcal O}(h^2)$ and ${\mathcal O}(h^{4-\ap})$, respectively. 
In contrast, the accuracy of our method,  depending on function regularity, is ${\mathcal O}(h^{s-\ap})$; see Figure \ref{fig:ex2} c) \& d) for $s = 8$. 
This is one main advantage of our method in comparison to those in \cite{Duo2018, Huang2014, Minden2020, Hao2021}.

{
Moreover, Table \ref{Tab-Ex2-new} compares their computational times. 
We usually formulate the task of approximating the function $(-\Delta)^{\frac{\alpha}{2}}u$ into a matrix-vector form as $(-\Delta)_h^{\frac{\alpha}{2}}\mathbf{u} = A\mathbf{u}$. 
Hence, the computational time of approximating $(-\Dt)^\fl{\ap}{2}u$ comes from two parts: assembling the matrix $A$ and performing the matrix-vector multiplication $A{\bf u}$.  
Table \ref{Tab-Ex2-new} shows that to obtain an accuracy of ${\mathcal O}(10^{-8})$, our method requires  $N = 64$ points, while the methods in \cite{Huang2014, Duo2018, Hao2021} require $N = 16384$ or more. 
Consequently, their computational time on computing $A{\bf u}$ is significantly longer than ours.  
To compare their time in assembling matrix $A$, we note that the discretizations from our method or those in \cite{Huang2014, Duo2018, Hao2021} all lead to a symmetric (multilevel) Toeplitz matrix $A$.  
Hence, we only need to compute the entries of a single column (or row).  
\begin{table}[ht!]
    \centering
    \begin{tabular}{|c|c|c|c|c|} \hline
    Method & Huang \cite{Huang2014}  & Duo \cite{Duo2018} & Hao \cite{Hao2021} & Ours\\
    \hline
    Error $\|e_\Dt^h\|_{l^\infty(\Og)}$ & 1.148e-2 & 9.009e-8 & 3.044e-7  & 2.055e-8\\
    \hline
    Number of points $N$ & \multicolumn{3}{|c|}{$16384$} & $64$\\
    \hline
   Time in assembling $A$ & 1.150e-3 &6.386e-4 & 2.477e-4  & 7.729e-2 \\
    \hline
    Time in computing $A{\bf u}$ &\multicolumn{3}{|c|}{1.26} & 4.710e-5\\
    \hline
\end{tabular}
\caption{Comparison of the accuracy and computational time (in second) of our method and those in \cite{Huang2014, Duo2018, Hao2021} when approximating $(-\Dt)^{\fl{\ap}{2}}u$ on $\Og = (-1, 1)$, where $\ap = 1.6$ and $u$ is defined in (\ref{ex2}) with $a = 1$ and $s = 8$.}\label{Tab-Ex2-new} 
\end{table}
For methods in \cite{Huang2014, Duo2018, Hao2021}, each entry of matrix $A$ is typically defined by a $d$-dimensional integral.  
In the special case of $d = 1$, it can be analytically integrated and expressed in terms of elementary functions, and thus the computational time required for computing entries is insignificant, as demonstrated in Table \ref{Tab-Ex2-new}. 
However, in  high-dimensional ($d \ge 2$) cases, each entry needs to be computed through numerical integration. 
Consequently, the computational time required to assemble matrix $A$ significantly increases as the dimensions $d$ increases.
In contrast, the  time of computing each entry of matrix $A$ in our method is independent of the dimension $d$. 

Hence, to achieve the same accuracy, our method requires a significantly smaller number of points, resulting in significantly shorter computational time. 
This advantage of our method becomes even more pronounced in higher dimensions.
}
\begin{example}
We consider a two-dimensional compact support function of the form
\begin{equation}\label{ex3}
    u(x,y) = \big[(1-x^2)(1-y^2)\big]^s_{+}, \qquad\mbox{for} \ \ (x,y) \in {\mathbb R}^2, 
\end{equation}
for $s \in {\mathbb N}$. 
In this case, the analytical results of $(-\Dt)^\fl{\ap}{2}u$ is unknown if $\ap < 2$. 
\end{example}

Figure \ref{fig:ex3} illustrates the numerical results of $(-\Dt)^{\fl{\ap}{2}}u$ on domain $\Og = (-1, 1)^2$ for $\ap = 1$, and $s = 2$ or $4$ in (\ref{ex3}). 
It shows that even though $u(\bx) \equiv 0$ for $\bx \in {\mathbb R}^2\backslash\Og$, function $(-\Dt)^{\fl{\ap}{2}}u(\bx)$ is not necessarily zero on  ${\mathbb R}^2\backslash\Og$ due to the nonlocality of the fractional Laplacian. 
\begin{figure}[ht!]
\centerline{
a) \includegraphics[width=4.46cm, height = 3.6cm]{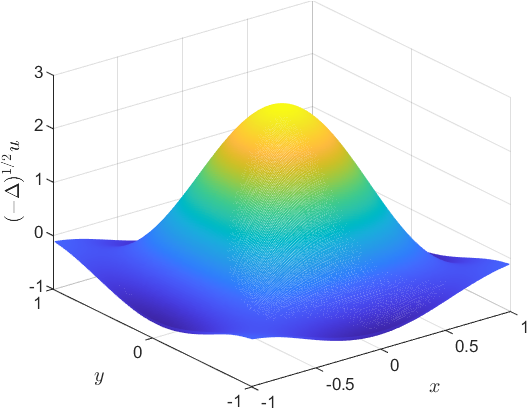}\hspace{5mm}
b) \includegraphics[width=4.46cm, height = 3.6cm]{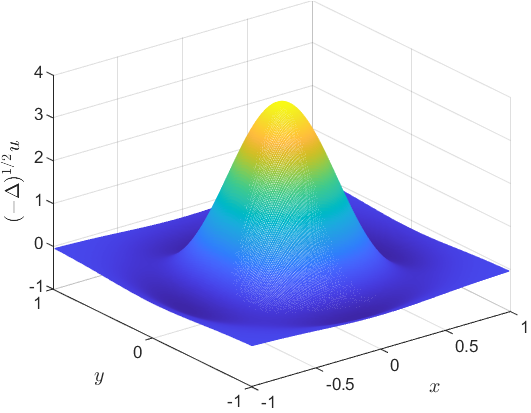}}
\caption{Numerical results of $(-\Dt)^\fl{1}{2} u(x,y)$, where $u$ is from \eqref{ex3} with $s = 2$ (a) or $s = 4$ (b). }
\label{fig:ex3}
\end{figure}
Tables \ref{tab1:ex3}--\ref{tab2:ex3} further show numerical errors of our method for different power $\ap$ and mesh size $h$. 
Since the exact solution of $(-\Dt)^{\fl{\ap}{2}}u$ is unknown, we use the numerical solution with a fine mesh size $h=2^{-10}$ as the reference to compute numerical errors. 
The function in (\ref{ex3}) satisfies $u \in C^{s-1,1}({\mathbb R}^2)$. 
It shows that our method has the accuracy of ${\mathcal O}(h^{s-\ap})$ for $\|e_\Dt^h\|_{l^\infty(\Og)}$, while ${\mathcal O}(h^{s-\ap+1/2})$ for $\|e_\Dt^h\|_{l^2(\Og)}$; see Table \ref{tab1:ex3}. 
\begin{table}[ht!]
    \centering
    \begin{tabular}{llllllll}\hline
& &$h=1/8$ &$h=1/16$ &$h=1/32$ &$h=1/64$ &$h=1/128$ \\ 
\hline
\multirow{4}{*}{$\ap = 0.5$} & \multirow{2}{*}{$\|e_\Dt^h\|_{l^\infty(\Og)}$} &1.274e-3 &4.432e-4 &1.573e-4 &5.581e-5 &1.977e-5 \\
&&c.r. &1.523 &1.494 &1.495 &1.497 \\
  \cline{2-7}
& \multirow{2}{*}{$\|e_\Dt^h\|_{l^2(\Og)}$} & 8.129e-4& 2.028e-4& 5.104e-5 & 1.281e-5& 3.210e-6\\
&&c.r. & 2.003&1.991 &1.994  &1.997 \\
\hline
\multirow{4}{*}{$\ap = 1.0$} & \multirow{2}{*}{$\|e_\Dt^h\|_{l^\infty(\Og)}$}&1.165e-2 &5.791e-3 &2.904e-3 &1.456e-3 &7.291e-4 \\
  &   &c.r. &1.008 &0.996 &0.997 &0.997 \\
   \cline{2-7}
 & \multirow{2}{*}{$\|e_\Dt^h\|_{l^2(\Og)}$} & 7.540e-3 & 2.675e-3 & 9.507e-4 & 3.371e-4 & 1.194e-4\\
   &   &c.r. & 1.495 & 1.492& 1.496 & 1.497\\
 \hline    
\multirow{4}{*}{$\ap = 1.7$} & \multirow{2}{*}{$\|e_\Dt^h\|_{l^\infty(\Og)}$} &1.712e-1 &1.387e-1 &1.129e-1 &9.189e-2 &7.495e-2  \\
 &           &c.r. &0.304 &0.297 &0.297 &0.294 \\
    \cline{2-7}
  & \multirow{2}{*}{$\|e_\Dt^h\|_{l^2(\Og)}$} & 1.121e-1 & 6.470e-2 & 3.734e-2 & 2.151e-2 & 1.240e-2\\
   &           &c.r. &0.793 & 0.793 & 0.796 &0.794\\
   \hline     
\multirow{2}{*}{$\ap = 2.0$} & \multirow{2}{*}{$\|e_\Dt^h\|_{l^2(\Og)}$}  & 0.334 & 0.238 & 0.169& 0.120 & 8.531e-2      \\      
& &c.r. &0.492&0.493 &0.495 &0.490 \\ 
            \hline
    \end{tabular}
    \caption{Numerical errors $\|e_\Dt^h\|_{l^\infty(\Og)}$ and $\|e_\Dt^h\|_{l^2(\Og)}$ and convergence rate (c.r.) of our method in approximating  $(-\Dt)^\fl{\ap}{2}u$ on $\Og = (-1,1)^2$, where $u$ is defined in \eqref{ex3} with $s=2$. Note that for $\ap = 2$ the method does not converge in this case. }
    \label{tab1:ex3}
\end{table}
\begin{table}[ht!]
    \centering
    \begin{tabular}{llllll}\hline
 &$h=1/8$ &$h=1/16$ &$h=1/32$ &$h=1/64$ &$h=1/128$ \\ 
\hline           
 \multirow{2}{*}{$\ap =  0.5$} &7.497e-5 &5.143e-6 &4.368e-7 &3.824e-8 &3.367e-9 \\
      &     c.r. &3.866 &3.558 &3.514 &3.505 \\
\hline
\multirow{2}{*}{$\ap = 1.0$}   &6.383e-4 &6.875e-5 &8.3453-6 &1.033e-6 &1.284e-7 \\
            &c.r. &3.215 &3.042 &3.015 &3.007 \\
\hline
\multirow{2}{*}{$\ap = 1.7$}   &9.453e-3 &1.734e-3 &3.432e-4 &6.896e-5 &1.393e-5  \\
            &c.r. &2.447 &2.336 &2.315 &2.307 \\   
\hline
\multirow{2}{*}{$\ap = 2.0$}   &2.856e-2 &6.523e-3 &1.592e-3 &3.940e-4 &9.799e-5 \\
            &c.r. &2.131 &2.034 &2.015 &2.007 \\ \hline
    \end{tabular}
    \caption{Numerical errors $\|e_\Dt^h\|_{l^\infty(\Og)}$  and convergence rate (c.r.) of our method in approximating  $(-\Dt)^\fl{\ap}{2}u$ on $\Og = (-1,1)^2$, where $u$ is defined in \eqref{ex3} with $s=4$.}
    \label{tab2:ex3}
\end{table}
Note that our method becomes more advantageous in higher dimensions, as fewer points are required to obtain the desired accuracy. 
Hence, it has significantly less computational and storage costs. 

\subsection{Fractional elliptic problems}
\label{section5-2}

In the following, we apply our numerical method to solve the fractional elliptic problems with extended Dirichlet boundary conditions.  
Denote the grid error function
\beas
\big(e_u^h\big)_{\bf j} = u(\bx_{\bf j}) - u^h_{\bf j}, \qquad\mbox{for} \ \,\bx_{\bf j} \in \Og,
\eeas
with $u(\bx_{\bf j})$ and $u_{\bf j}^h$ representing the exact and numerical solutions at point $\bx_{\bf j}$, respectively.

\begin{example}\label{example-poisson1}
Consider the one-dimensional Poisson equation with extended homogeneous Dirichlet boundary conditions: 
\bea\label{elliptic-ex1}
\begin{aligned}
(-\Dt)^\fl{\ap}{2} u(x) = x\,_2F_1\Big(\fl{3+\ap}{2}, -s+\fl{\ap}{2};\, \fl{3}{2}; \, x^2\Big), \qquad &\mbox{for} \ \, x \in (-1, 1),\\
u(x) = 0, \qquad\, &\mbox{for} \ \,  x \in {\mathbb R}\backslash(-1,1), 
\end{aligned}
\eea
where $s > 0$. 
The exact solution of (\ref{elliptic-ex1}) is given by \cite{Dyda2012, Wu2022}:
\beas
u(x) = \fl{\sqrt{\pi}\, \Gamma\big(s+1-\fl{\ap}{2}\big)}{2^{\ap+1}\Gamma\big(s+1\big)\Gamma\big((3+\ap)/2\big)}\, x\big(1-x^2\big)^s_+, 
\eeas
for any $\ap \in (0, 2]$. 
\end{example}

Figure \ref{fig:ex4} presents the  numerical errors and convergence rates of our method for various $\ap$ and $s$, where order lines are included for better comparison.  
Here,   the solution $u \in C^{s-1, 1}(\bar{\Og})$ for $s \in {\mathbb N}$, while $C^{\lfloor s\rfloor,\, s-\lfloor s\rfloor}(\bar{\Og})$ for $s \notin {\mathbb N}$.  It shows that the smoother the solution $u$, the smaller the numerical errors, and the higher the accuracy rate of our method. 
\begin{figure}[ht!]
\centerline{
\includegraphics[width=5.6cm, height = 4.86cm]{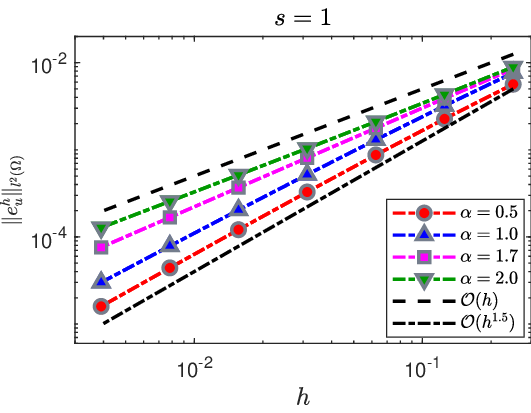} \hspace{3mm}
\includegraphics[width=5.6cm, height = 4.86cm]{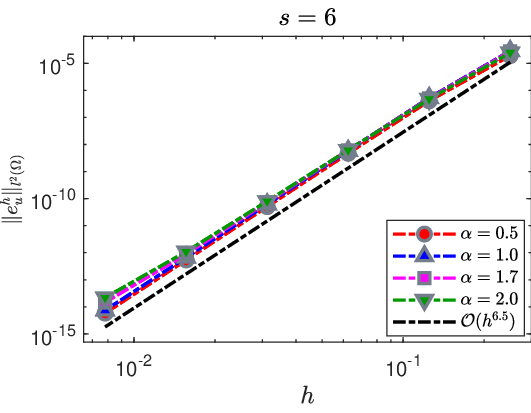}}
\caption{Numerical errors  of our method in solving the Poisson problem (\ref{elliptic-ex1}).}
\label{fig:ex4}
\end{figure}
Numerical errors $\|e_u^h\|_{l^2(\Og)}$ decays with a rate between $s$ and $s+1/2$, confirming our theoretical results in Theorem \ref{thm3} (i). 
More precisely, the observed  accuracy rate is higher than the predicted rate (i.e., ${\mathcal O}(h^{s-\ap+\fl{1}{2}})$). 
In addition, we find that the convergence rate of $\|e_u^h\|_{l^\infty(\Og)}$ is ${\mathcal O}(h^{s})$. 
Our extensive studies show that our method outperforms those in \cite{Huang2014, Duo2018, Minden2020, Hao2021}; we omit showing their details for brevity.

{
\begin{remark}
To further demonstrate the effectiveness of our method, we also measure the errors at non-grid points.  
Let  $u_j^h$ represent the numerical approximation of $u(x_j)$,  obtained by numerically solving the fractional Poisson equation in (\ref{elliptic-ex1}) at grid points $\{x_j\}_{j\in\Og_h}$. 
Denote $\tilde{x}_l = lh$ for $l \in \mathbb{R}$ (instead of ${\mathbb Z}$) to distinguish from the grid points $x_j$. 
Setting $\ap = 0$ in (\ref{1Dscheme}), we obtain the approximate solution of (\ref{elliptic-ex1}) at any point $\tilde{x}_l \in \Omega$:
\bea\label{utilde}
&&\tilde{u}_l^h = \sum_{j\in\Og_h} \,_0F_1\Big(; \fl{3}{2}; \fl{-\pi^2(l-j)^2}{4}\Big)u_j^h\nn\\
&&\hspace{5mm} = \sum_{j\in\Og_h} {\rm sinc}\Big(\fl{\pi}{h}(x_l-x_j)\Big) u_j^h, \qquad\mbox{for} \ \, \tilde{x}_l\in \Og,
\eea
where `$\sim$' is included in $\tilde{u}_l^h$ to indicate that it is computed from (\ref{utilde}). 
Figure \ref{Fig-remark} (a) shows the numerical errors at both grid points $\{x_j\}$ and non-grid points $\{\tilde{x}_l\}$, where we set  $\ap = 1$ and $s = 6$ in (\ref{elliptic-ex1}). 
\begin{figure}[ht!]
\centerline{
 (a)\includegraphics[width=5.86cm, height = 4.86cm]{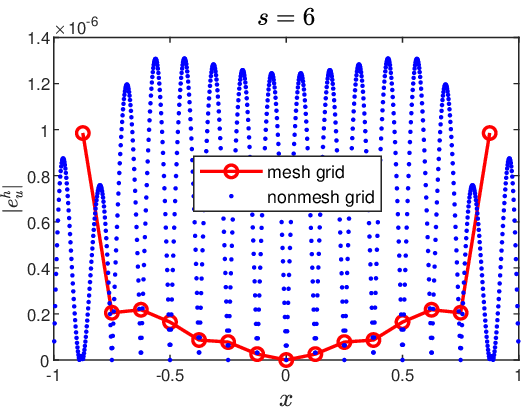}\hspace{3mm}
 (b)\includegraphics[width=5.86cm, height = 4.86cm]{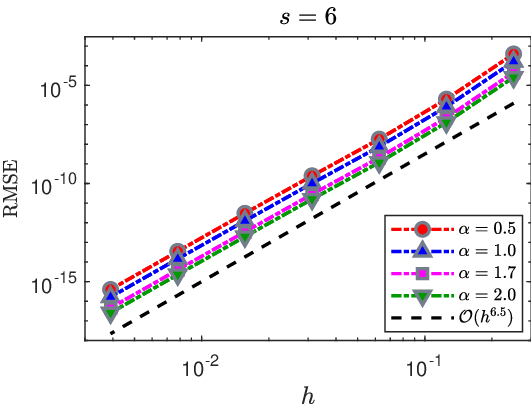}}
 \caption{Numerical errors of our method in solving the Poisson problem (\ref{elliptic-ex1}). (a) Pointwise errors at grid points $\{x_j\}$ (red `$\circ$') and non-grid points $\{\tilde{x}_l\}$ (blue `$\bullet$'), where $\ap = 1$; (b) RMS errors for different $\ap$, where $h$ indicates the mesh size used to compute $u_j^h$ in (\ref{utilde}).}
\label{Fig-remark}
\end{figure}
The points $\{\tilde{x}_l\}_{l \in \tilde{\Og}_h}$ are uniformly distributed in the interval $(-1, 1)$ with a total of $M = 1000$ points, where the index set \,  $\tilde{\Og}_h = \{ l = 2m/125 \mid\,  m \in {\mathbb Z},\ \mbox{and }\, lh \in \Og\}$. 
It shows that our method has small errors not only at grid points but also at non-grid points. 
Moreover, we present the root mean square errors (RMS) in Figure \ref{Fig-remark} (b), in comparison to the $l_2$-norm errors  in Figure \ref{fig:ex4}.
Here, the RMS error is defined as
\beas
\|e_u^h\|_{\rm rms} = \bigg(\fl{1}{M} \sum_{l \in \tilde{\Og}_h} \big|u(\tilde{x}_l) - \tilde{u}_l^h\big|^2 \bigg)^{1/2}. 
\eeas
Note that $M = 1000$ is large enough so that further increasing its value does not change the RMS errors. 
Figure \ref{Fig-remark} (b) again shows that our method yields good accuracy for all points across the domain, not only at the grid points. 
\end{remark}}
\begin{example}\label{example5-2-2}
Consider the two-dimensional elliptic problem: 
\begin{equation}\label{elliptic-ex2}
\begin{aligned}
    (-\Dt)^{\fl{\ap}{2}} u(\bx) + u(\bx) = f(\bx),\qquad  &\mbox{for} \ \  \bx\in\Omega, \\
    u(\bx) = e^{-a^2|\bx|^2},\qquad  &\mbox{for} \ \ \bx\in\Omega^{c},
\end{aligned}
\end{equation}
for $a \in {\mathbb R}$. 
Choose the right-hand side function as $$f(\bx) = (2a)^\ap \Gamma\big(1+\fl{\ap}{2}\big){}_1F_1\Big(1+\fl{\ap}{2};\, 1; \, -a^2|\bx|^2\Big) + e^{-a^2|\bx|^2}, \qquad \mbox{for} \ \ \bx \in \Og, $$ such that the exact solution of (\ref{elliptic-ex2}) is  given by $u(\bx) = e^{-a^2|\bx|^2}$,  for $\bx \in \Og$.
\end{example}

Set the domain $\Og = (-1.5, 1.5)^2$. 
Table \ref{tab1:2d-sol-exp} presents the numerical errors of our method for different power $\ap$ and mesh size $h$, where we choose $a = 6$ in (\ref{elliptic-ex2}).  
It shows that numerical errors decrease quickly when reducing the mesh size $h$, and our method has a spectral accuracy. 
\begin{table}[ht!]
\centering
\begin{tabular}{|lllll|}\hline
             & $\ap = 0.5$ & $\ap = 1.0$ & $\ap = 1.7$  & $\ap = 2.0$ \\ \hline
$h = 1/8$    &3.512e-2       &0.0154       &1.227        &3.013\\
$h = 1/16$   &9.835e-8    &5.174e-7    &5.285e-6     &1.438e-5\\
$h = 1/32$   &3.100e-15   &1.420e-14   &3.365e-13    &1.288e-12 \\
\hline
\end{tabular}
    \caption{Numerical errors  $\|e_u^h\|_{l^2(\Og)}$ of our method in solving the 2D elliptic problem (\ref{elliptic-ex2}) on  $\Og = (-1.5, 1.5)^2$, where $a=6$ is chosen in (\ref{elliptic-ex2}).}
    \label{tab1:2d-sol-exp}
\end{table}
In this case, the solution $u \in C^\infty({\mathbb R}^2)$, and the observed spectral accuracy confirms our analytical results in Theorem \ref{thm3} (ii). 
We find that the maximum errors in this case occur at the center of the domain, i.e., around $\bx = {\bf 0}$. 

\begin{example}\label{example-coexist}
Consider the coexistence problem of normal and anomalous diffusion:
\bea\label{elliptic-ex3}
\big[\lambda_1(-\Dt)^{\fl{\ap_1}{2}} + (1-\lambda_1)(-\Dt)^{\fl{\ap_2}{2}} \big] u(\bx) = e^{-|\bx|^2}\cos^4\Big(\fl{3\pi}{2}|\bx|\Big)\chi_{\{|\bx|<1\}},
\eea
for $\bx \in \Og$, with homogeneous Dirichlet boundary conditions, i.e. $u(\bx) = 0$ for $\bx \in \Og^c$, where $\lambda_1 \ge 0$, and $\ap_1, \ap_2 \in (0, 2]$. 
Without loss of generality, we assume $\ap_1 \le  \ap_2$.  
The characteristic function $\chi_{\{|\bx|<1\}} = 1$ if $|\bx| < 1$, and otherwise it is 0 if $|\bx| \ge 1$.  
\end{example}

The nonlocal elliptic equation (\ref{elliptic-ex3}) can be viewed as  the steady state  of the coexistence of anomalous-anomalous (i.e., both $\ap_1, \ap_2 < 2$) or anomalous-normal (i.e., $\ap_1 < \ap_2 \le 2$) diffusion observed in many fields \cite{Javanainen2012, Lenzi2016, Zhang2012}. 
The exact solution of (\ref{elliptic-ex3}) is unknown. 
Hence, we use our numerical method to study the solution behaviors with $\Og = (-1, 1)^2$. 
Figure \ref{fig:ex6} presents the numerical solution for different $\lambda_1$, $\ap_1$ and $\ap_2$. 
\begin{figure}[ht!]
\centerline{
\includegraphics[width=3.0cm, height = 2.64cm]{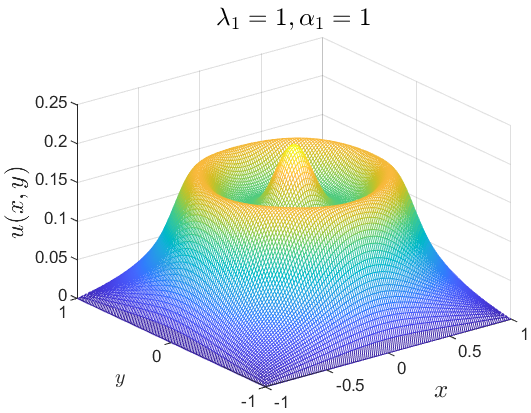}\hspace{1mm}
\includegraphics[width=3.0cm, height = 2.64cm]{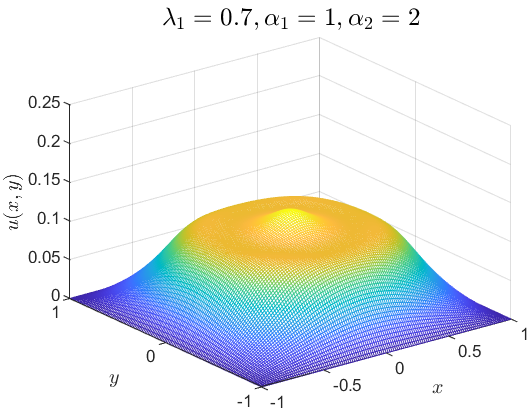}\hspace{1mm}
\includegraphics[width=3.0cm, height = 2.64cm]{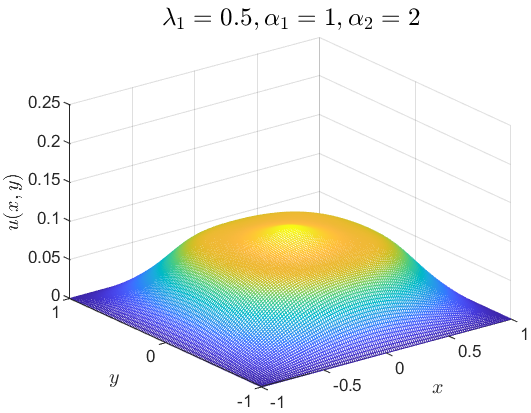}\hspace{1mm}
\includegraphics[width=3.0cm, height = 2.64cm]{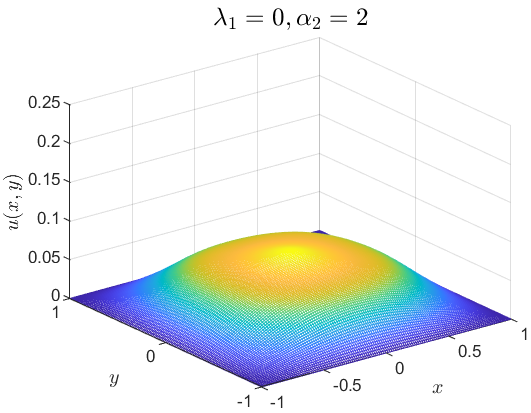}}
\vspace{3mm}
\centerline{
\includegraphics[width=3.0cm, height = 2.64cm]{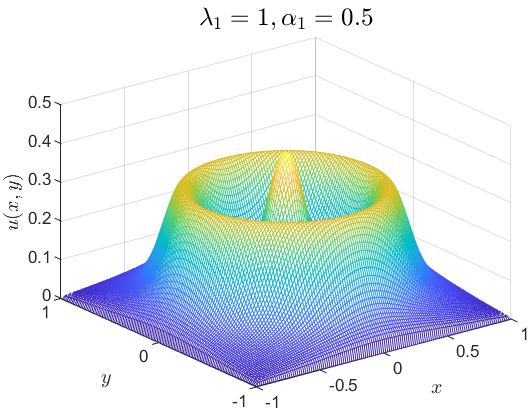}\hspace{1mm}
\includegraphics[width=3.0cm, height = 2.64cm]{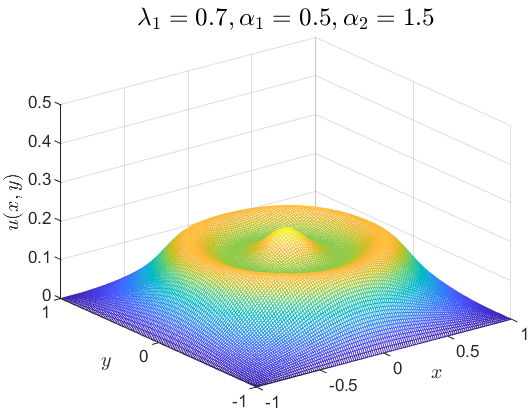}\hspace{1mm}
\includegraphics[width=3.0cm, height = 2.64cm]{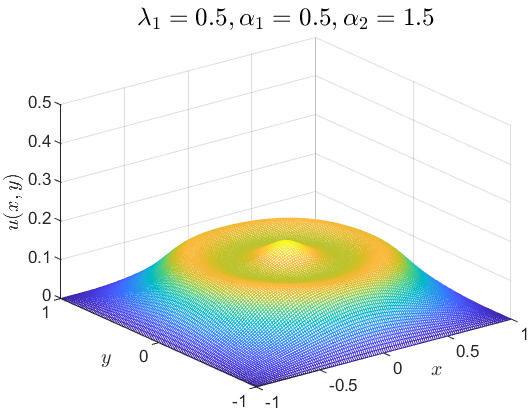}\hspace{1mm}
\includegraphics[width=3.0cm, height = 2.64cm]{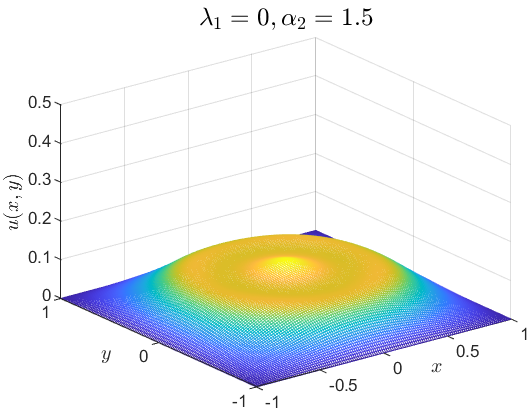}}
\caption{Solution of the coexistence problem (\ref{elliptic-ex3}) for different $\lambda_1, \ap_1$ and $\ap_2$. }
\label{fig:ex6}
\end{figure}
It shows that the solutions are radially symmetric, and its maximum value is reached at the center $\bx = {\bf 0}$. 

In pure classical diffusion cases (i.e., $\lambda_1 = 0$ and $\ap_2 = 2$),  the solution decreases gradually from the center to the boundary due to the homogeneous Dirichlet boundary conditions. 
In contrast, the solutions in the fractional cases are very different -- an annular layer is formed in between the center and boundary. 
The smaller the power $\ap$, the sharper the layer (cf. results of $\lambda_1 = 1$, $\ap_1 = 1$ and $\ap_1 = 0.5$).   
The solutions in mixed anomalous-normal or anomalous-anomalous cases are more complicated depending on the value of $\lambda_1$. 
\begin{figure}[ht!]
\centerline{
\includegraphics[width=5.68cm, height = 4.64cm]{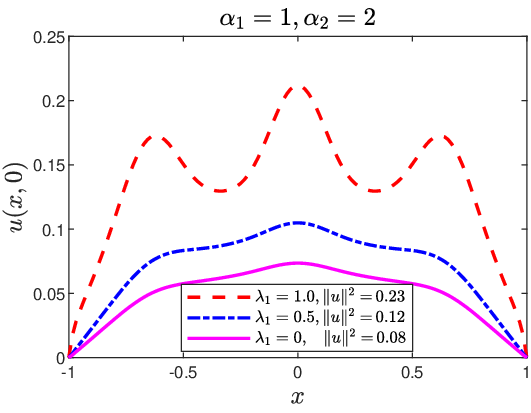}\hspace{4mm}
\includegraphics[width=5.68cm, height = 4.64cm]{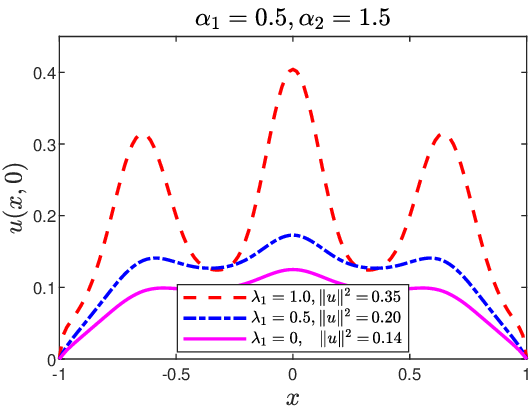}}
\caption{Comparison of solution at $y = 0$ for different $\lambda_1$,  $\ap_1$ and $\ap_2$,  where$\|u\|^2 = \int_{\Og} |u(\bx)|^2 \md\bx$. }\label{fig-ex6-2}
\end{figure}
In Figure \ref{fig-ex6-2}, we compare the solution at $y = 0$ for different $\lambda_1$ and $\ap_k$ (for $k = 1, 2$) and also the solution mass $\|u\|^2 = \int_{\Og}|u(\bx)|^2 \md\bx$.  
Generally, the smaller the value of $\lambda_1$, the smaller the magnitude of solution $u$, and the lower   the solution mass $\|u\|^2$.

\section{Conclusions}
\label{section6}

We proposed a novel and simple spectral method to discretize the $d$-dimensional (for $d \ge 1$) fractional Laplacian $(-\Dt)^\fl{\ap}{2}$. 
The key idea of our method is to apply the semi-discrete Fourier transforms to approximate the pseudo-differential definition of the fractional Laplacian.  
Our scheme can be viewed as a discrete pseudo-differential operator with symbol $|\bxi|^\ap$ and thus provides an exact discrete analogue of the fractional Laplacian $(-\Dt)^\fl{\ap}{2}$. 
Detailed numerical analysis was presented to study the  truncation errors under different conditions. 
It showed that our method can achieve a spectral accuracy if the function is smooth enough. 
Moreover,  the stability and convergence of our method in solving the fractional Poisson equations were analyzed.
In contrast to the existing Fourier pseudospectral methods in \cite{Duo2016, Kirkpatrick2016, Duo2021}, our method evades the constraint of periodic boundary conditions. 
Moreover, it results in a multilevel Toeplitz stiffness matrix,  and thus fast algorithms can be developed for efficient matrix-vector multiplication. 
The computational complexity is ${\mathcal O}(2N\log (2N))$, while the memory storage is ${\mathcal O}(N)$ with $N$ the total number of points.

Extensive numerical experiments were reported to demonstrate the effectiveness of our method. 
Our numerical study showed that the smoother the function to approximate, the higher the accuracy of our method.  
The observed accuracy rate verified our analytical results. 
The comparison of our method to finite difference methods further demonstrated its effectiveness. 
It showed that our method can be formulated similarly to finite difference methods. 
However, our method could achieve much higher accuracy, and the cost of computing matrix entries is independent of dimension $d$. 
The advantage of our method is more significant in high dimensions, as fewer points are demanded to obtain the desired accuracy. 
Consequently, our method considerably reduces the computational and storage costs. 
In addition,  the accuracy of our method in solving the fractional elliptic equations was numerically studied.  
We found that if the solution $u \in C^{\lfloor s\rfloor, \, s-\lfloor s\rfloor}(\bar{\Og})$,   our method has errors  $\|e_u^h\|_{l^\infty(\Og)} \sim {\mathcal O}(h^{s})$.  
We also applied our method to study the coexistence of anomalous-anomalous or anomalous-normal diffusion problems. 

\section*{Acknowledgements}
This work was partially supported by the US National Science Foundation under grant numbers DMS--1913293 and DMS--1953177. 

\bibliographystyle{plain}

\end{document}